\title{$ (k, a) $-generalized Fourier transform with negative $ a $}
\author{Tatsuro Hikawa}
\address[Tatsuro Hikawa]{%
  Graduate School of Mathematical Sciences, 
  the University of Tokyo,
  3-8-1 Komaba, Meguro-ku, Tokyo, 153-8914, Japan%
}
\dedicatory{%
  Dedicated to Professor Toshiyuki Kobayashi\\
  with admiration for his profound and creative works%
}
\begin{document}

\begin{abstract}
  The $ (k, a) $-generalized Fourier transform $ \FT{k, a} $ introduced
  by Ben Saïd--Kobayashi--Ørsted is a deformation family of the classical
  Fourier transform with a Dunkl parameter $ k $ and a parameter $ a > 0 $
  that interpolates minimal representations of two different simple Lie groups.
  In the present paper, we focus on the case $ a < 0 $. As a main result,
  we find a unitary transform that intertwines the known case $ a > 0 $
  and the new case $ a < 0 $.
\end{abstract}

\maketitle
\tableofcontents

\section{Introduction}

\subsection{Background}

A minimal representation is an infinite-dimensional unitary representation
with minimum Gelfand--Kirillov dimension. On the other hand, we can consider
a minimal representation as a manifestation of ``large symmetry'' of the space
acted by the group, and hence it is expected to control well global analysis on
the space. This is the idea of ``global analysis of minimal representations''
initiated by T. Kobayashi~\cite{Kob2011,Kob2013}, which caused a transition
from algebraic representation theory to analytic representation theory,
spawning an active research area that continues to the present day.

The Fourier transform in the classical harmonic analysis on $ \setR^N $ can
be interpreted as the unitary inversion operator appearing in the Schrödinger
model of the Weil representation, which is a unitary representation
of the metaplectic group $ \mathit{Mp}(N, \setR) $ on the Hilbert space
$ L^2(\setR^N) $ (see \cite{Fol1989} for more details),
and decomposes into two minimal representations.
This is a classical example of global analysis controlled well
by a minimal representation.

Kobayashi--Ørsted~\cite{KO2003a,KO2003b,KO2003c} analyzed the minimal
representation of the indefinite orthogonal group $ O(p, q) $
from various aspects, and in particular, they constructed a realization
of the minimal representation on $ L^2(C) $, or the Schrödinger model.
Here, $ C $ is a $ (p + q - 3) $-dimensional light cone in $ \setR^{p - 1, q - 1} $.
Based on the idea of ``global analysis of minimal representations'',
Kobayashi--Mano~\cite{KM2005,KM2007a,KM2007b,KM2011} built a new theory
of harmonic analysis by using the Schrödinger model $ L^2(C) $ of
the minimal representation of $ O(p, q) $ instead of that of
$ \mathit{Mp}(N, \setR) $. In particular, they introduced
the \emph{Fourier transform on the light cone}, which is a counterpart
of the classical Fourier transform. The special case $ (p, q) = (N + 1, 2) $,
where the Hilbert space $ L^2(C_+) $ ($ C_+ $ denotes the ``future part''
of the light cone) is isomorphic to $ L^2(\setR^N, \enorm{x}^{-1} \,dx) $,
is studied in \cite{KM2005,KM2007a}.

After that, Ben Saïd--Kobayashi--Ørsted~\cite{BKO2009,BKO2012} introduced
a family of $ \mathfrak{sl}_2 $-triples
\[
  \DiffH{k, a}, \quad \DiffEp{k, a}, \quad \DiffEm{k, a}
\]
of differential operators indexed by two parameters $ k $ and $ a $
(see \cref{ssec:sl2-triple}),
where $ k $ is a Dunkl parameter, and $ a $ is a deformation parameter.
By using this $ \mathfrak{sl}_2 $-triple, they introduced the
\emph{$ (k, a) $-generalized Laguerre semigroup}
$ \faml{\LS{k, a}(z)}{\RePart z \geq 0} $ (see \cref{ssec:ls}),
and its special value, the \emph{$ (k, a) $-generalized Fourier transform}
$ \FT{k, a} $ (see \cref{ssec:ft}).
The $ (k, a) $-generalized Fourier transform $ \FT{k, a} $ includes
some known transforms:
\begin{itemize}
  \item the classical Fourier transform $ \FT{} = \FT{0, 2} $,
  \item the Hankel transform $ \FT{0, 1} $,
        or the Fourier transform on the light cone when $ (p, q) = (N + 1, 2) $~\cite{KM2007a},
  \item the Dunkl transform $ \mscrD_k = \FT{k, 2} $~\cite{Dun1992}.
\end{itemize}
In particular, the parameter $ a $ interpolates continuously the minimal
representation of two simple Lie groups $ \mathit{Mp}(N, \setR) $
and $ O(N + 1, 2) $.

In recent years, the theory of the $ (k, a) $-generalized Fourier transform
has been actively researched from the perspectives of real analysis and
probability theory, beyond the framework of representation theory.

\subsection{Result of the paper}

In the present paper, we find a unitary transform that intertwines
the known case $ a > 0 $ and the new case $ a < 0 $
(\cref{thm:radial-intertwining,thm:total-intertwining}).
Although the $ \mathfrak{sl}_2 $-triple
$ (\DiffH{k, a}, \DiffEp{k, a}, \DiffEm{k, a}) $ is defined for any
$ a \in \setC \setminus \setenum{0} $, $ L^2 $-theory is only available
for $ a > 0 $ in the previous studies. This intertwining operator allows
us to extend the results of \cite{BKO2009,BKO2012} to the case $ a < 0 $.
In particular, we naturally extend the definition of the $ (k, a) $-generalized
Laguerre semigroup and Fourier transform to the case $ a < 0 $
(\cref{ssec:ls-with-negative-a,ssec:ft-with-negative-a}).

\subsection{Organization of the paper}

In \cref{sec:review}, we briefly review the theory of $ (k, a) $-generalized
Fourier transforms by \cite{BKO2009,BKO2012}. This section contains no new results.
In \cref{sec:negative-a}, we show a unitary transform that intertwines
the cases $ a > 0 $ and $ a < 0 $, which is used to extend the results
stated in \cref{sec:review} to the case $ a < 0 $.

\subsection{Notation}

\begin{itemize}
  \item $ \setN = \setenum{0, 1, 2, \dots} $, $ \setRp = \set{x \in \setR}{x > 0} $.
  \item We write $ \innprod{\blank}{\blank} $ for the Euclidean inner product,
        and $ \enorm{\blank} $ for the Euclidean norm.
  \item Function spaces, such as $ C^\infty $ spaces and $ L^2 $ spaces,
        are understood to consist of complex-valued functions.
\end{itemize}

\section{Review of the theory of \texorpdfstring{$ (k, a) $}{(k, a)}-generalized Fourier transforms}
\label{sec:review}

In this section, we briefly review the theory of $ (k, a) $-generalized Fourier
transform by \cite{BKO2009,BKO2012}. This section contains no new results.
Symbols generally follow from \cite{BKO2012}, but for convenience,
new or slightly modified symbols are used in a few places.

\subsection{Preliminaries on the Dunkl theory}

In this subsection, we explain some basic definitions and facts on the Dunkl
theory~\cite{Dun1988,Dun1989} as in \cite[Section~2]{BKO2012}.

For $ \alpha \in \setR^N \setminus \setenum{0} $, we write $ r_\alpha $ for
the orthogonal reflection with respect to the hyperplane $ (\setR \alpha)^\perp $,
namely,
\[
  r_\alpha(x) = x - \frac{2 \innprod{\alpha}{x}}{\enorm{\alpha}^2} \alpha
  \quad (x \in \setR^N).
\]
Let $ \mscrR $ be a reduced root system on $ \setR^N $ in the sense that
\begin{itemize}
  \item $ \mscrR $ is a finite subset of $ \setR^N \setminus \setenum{0} $,
  \item $ r_\alpha(\mscrR) = \mscrR $ for all $ \alpha \in \mscrR $, and
  \item $ \mscrR \cap \setR \alpha = \setenum{\alpha, -\alpha} $
        for all $ \alpha \in \mscrR $.
\end{itemize}
Note that we do not impose crystallographic conditions on roots
and do not require that $ \mscrR $ spans $ \setR^N $.

The subgroup $ \mfrakC $ of $ O(N) $ generated by all the reflections
$ r_\alpha $ is called the \emph{finite Coxeter group associated with $ \mscrR $}.
We say that a function $ \map{k}{\mscrR}{\setC} $ is a \emph{multiplicity function}
if it is constant on every $ \mfrakC $-orbit.
We usually write $ k_\alpha $ instead of $ k(\alpha) $.
We say that a multiplicity function $ k $ is \emph{non-negative}
if $ k_\alpha \geq 0 $ for all $ \alpha \in \mscrR $.
The \emph{index} of a multiplicity function $ k $ is defined as
\[
  \dunklindex{k}
  = \frac{1}{2} \sum_{\alpha \in \mscrR} k_\alpha
  = \sum_{\alpha \in \mscrR^+} k_\alpha,
\]
where $ \mscrR^+ $ is any positive system of $ \mscrR $.

For a (not necessarily non-negative) multiplicity function $ k $,
the \emph{Dunkl Laplacian} $ \Laplacian_k $
(see \cite{Dun1988} and \cite[1.1~Definiton]{Dun1989})
is defined by
\[
  \Laplacian_k f(x)
  = \Laplacian f(x)
    + \sum_{\alpha \in \mscrR^+} k_\alpha
    \Paren*{
      \frac{2 \innprod{\nabla f(x)}{\alpha}}{\innprod{\alpha}{x}}
      - \enorm{\alpha}^2 \frac{f(x) - f(r_\alpha(x))}{\innprod{\alpha}{x}^2}
    },
\]
where $ \Laplacian = \sum_{j = 1}^{N} (\Pdif{x_j})^2 $ is the classical Laplacian
and $ \nabla = (\Pdif{x_1}, \dots, \Pdif{x_N}) $ is the gradient operator.
When $ k = 0 $, $ \Laplacian_k $ reduces to the classical Laplacian $ \Laplacian $.

Let $ \mcalP^m(\setR^N) $ denote the space of homogeneous polynomials of degree $ m $.
We define the space of \emph{$ k $-harmonic polynomials of degree $ m $} as
\[
  \mcalH_k^m(\setR^N)
  = \set{p \in \mcalP^m(\setR^N)}{\Laplacian_k p = 0},
\]
and the space of \emph{$ k $-spherical harmonics of degree $ m $} as
\[
  \mcalH_k^m(S^{N - 1})
  = \set{\restr{p}{S^{N - 1}}}{p \in \mcalH_k^m(\setR^N)}.
\]
These spaces are $ \mfrakC $-stable. When $ k = 0 $, these are reduced to
the space $ \mcalH^m(\setR^N) $ of classical harmonic polynomials and
the space $ \mcalH^m(S^{N - 1}) $ of classical spherical harmonics, respectively,
which are $ O(N) $-stable.

We have the following Hilbert sum decomposition analogous to the classical
case ($ k = 0 $).

\begin{fact}[{\cite[pp.\,37--39]{Dun1988}}]\label{thm:decomposition}
  For a non-negative multiplicity function $ k $, we have the Hilbert sum
  decomposition
  \[
    L^2(S^{N - 1}, w_k(\omega) \,d\omega)
    = \sumoplus_{m \in \setN} \mcalH_k^m(S^{N - 1}),
  \]
  where the weight function $ w_k $ with respect to the standard measure
  $ d\omega $ on $ S^{N - 1} $ is defined by
  \[
    w_k(\omega)
    = \prod_{\alpha \in \mscrR^+} \abs{\innprod{\alpha}{\omega}}^{2k_\alpha}
    \quad (\omega \in S^{N - 1}).
  \]
\end{fact}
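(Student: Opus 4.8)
The plan is to adapt the classical argument for spherical harmonics, with the Dunkl Laplacian $\Laplacian_k$ playing the role of the ordinary Laplacian throughout. I would organize the proof into three independent parts: first, an algebraic decomposition of homogeneous polynomials into $k$-harmonic components; second, the mutual orthogonality of the spaces $\mcalH_k^m(S^{N - 1})$ inside $L^2(S^{N - 1}, w_k(\omega)\,d\omega)$; and third, the density of polynomial restrictions in this weighted $L^2$-space. The Hilbert sum decomposition follows by combining these three facts.

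For the algebraic step, the key input is the commutation relation $[\Laplacian_k, \enorm{x}^2] = 2(2E + N + 2 \dunklindex{k})$, where $E = \sum_{i = 1}^{N} x_i \Pdif{x_i}$ is the Euler operator; this is checked by a direct computation from the definition of $\Laplacian_k$ together with the $\mfrakC$-invariance of $\enorm{x}^2$. The relation exhibits multiplication by $\enorm{x}^2$, the operator $\Laplacian_k$, and $E$ as an $\mathfrak{sl}_2$-triple acting on polynomials. From it I would extract the essential fact that $\map{\Laplacian_k}{\mcalP^m(\setR^N)}{\mcalP^{m - 2}(\setR^N)}$ is surjective for every $m$, equivalently that the associated Fischer-type pairing is nondegenerate. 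Surjectivity together with a dimension count and an induction on $m$ yields $\mcalP^m(\setR^N) = \bigoplus_{0 \leq 2j \leq m} \enorm{x}^{2j}\, \mcalH_k^{m - 2j}(\setR^N)$. Restricting to $S^{N - 1}$ collapses each factor $\enorm{x}^{2j}$ to a constant, so the restriction of every homogeneous polynomial of degree $m$ lies in $\sum_{0 \leq 2j \leq m} \mcalH_k^{m - 2j}(S^{N - 1})$; summing over $m$ shows that the algebraic span of the spaces $\mcalH_k^m(S^{N - 1})$ is exactly the space of restrictions of all polynomials.

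For orthogonality I would establish a Green-type identity witnessing that $\Laplacian_k$ is symmetric with respect to the measure $w_k(x)\,dx$, where $w_k(x) = \prod_{\alpha \in \mscrR^+} \abs{\innprod{\alpha}{x}}^{2 k_\alpha}$ is the homogeneous extension (of degree $2 \dunklindex{k}$) of the weight on the sphere. Integrating by parts over the unit ball and invoking Euler's relation for homogeneous functions turns the vanishing of $\Laplacian_k$ on two $k$-harmonic polynomials of distinct degrees into orthogonality of their restrictions against $w_k(\omega)\,d\omega$. Density then follows from the Stone--Weierstrass theorem: since $k \geq 0$ the weight $w_k$ is continuous and bounded on the compact sphere, so the measure is finite, and the polynomial restrictions form a point-separating subalgebra, hence are dense in $C(S^{N - 1})$ and a fortiori in $L^2(S^{N - 1}, w_k\,d\omega)$.

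The main obstacle is the nondegeneracy of the Fischer pairing underlying the surjectivity of $\Laplacian_k$, and this is precisely where non-negativity of $k$ is indispensable: for general complex $k$ the pairing can degenerate and the splitting of $\mcalP^m(\setR^N)$ into $k$-harmonic pieces may fail. I would therefore isolate the positivity argument, namely that the Fischer-type quadratic form is positive definite on each $\mcalP^m(\setR^N)$ when $k \geq 0$, as the technical heart of the proof, the analytic ingredients (Green's identity and Stone--Weierstrass) being comparatively routine.
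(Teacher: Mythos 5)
Your argument is correct: the paper states this result as a \emph{Fact} imported from Dunkl's 1988 paper without reproducing a proof, and your three-step reconstruction (the $\mathfrak{sl}_2$-based decomposition $\mathcal{P}^m = \bigoplus_{0 \le 2j \le m} \lVert x\rVert^{2j}\mathcal{H}_k^{m-2j}$ via positivity of the Fischer pairing for $k \ge 0$, orthogonality of distinct degrees through a Green identity for $\Delta_k$ with respect to $w_k(x)\,dx$, and density of polynomial restrictions by Stone--Weierstrass) is precisely the standard route taken in the cited source. You also correctly identify the non-negativity of $k$ as the point where the argument genuinely uses the hypothesis, so there is nothing to add.
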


Henceforth, we fix a reduced root system $ \mscrR $ on $ \setR^N $.

\subsection{\texorpdfstring{$ \mathfrak{sl}_2 $}{sl2}-triple of differential-difference operators}
\label{ssec:sl2-triple}

For a (not necessarily non-negative) multiplicity function $ k $
and $ a \in \setC \setminus \setenum{0} $,
the differential-difference operators on $ \setR^N \setminus \setenum{0} $
introduced by \cite{BKO2009,BKO2012} are
\begin{align*}
  \DiffH{k, a}  &= \frac{N - 2 + 2 \dunklindex{k} + a}{a} + \frac{2}{a} E, \\
  \DiffEp{k, a} &= \frac{i}{a} \enorm{x}^a, \\
  \DiffEm{k, a} &= \frac{i}{a} \enorm{x}^{2 - a} \Laplacian_k,
\end{align*}
where $ E = \sum_{j = 1}^{N} x_j \Pdif{x_j} $ is the Euler operator on $ \setR^N $.
They form an $ \mathfrak{sl}_2 $-triple~\cite[Theorem~3.2]{BKO2012},
and hence yield a Lie algebra representation
\[
  \map{\rep{k, a}}{\mathfrak{sl}(2, \setR)}{\End_{\setC}(C^\infty(\setR^N \setminus \setenum{0}))}.
\]
We again write $ \rep{k, a} $ for its complexification.

For the above parameters $ k $, $ a $ and $ m \in \setN $, we consider
the following differential-difference operators on $ \setRp $:
\begin{align*}
  \DiffH{k, a}[m]   &= \frac{N - 2 + 2 \dunklindex{k} + a}{a} + \frac{2}{a} \vartheta, \\
  \DiffEp{k, a}[m] &= \frac{i}{a} r^a, \\
  \DiffEm{k, a}[m] &= \frac{i}{a} r^{-a} (\vartheta - m) (\vartheta + N - 2 + 2 \dunklindex{k} + m),
\end{align*}
where $ \vartheta = r \frac{d}{dr} $ is the Euler operator on $ \setRp $.
They are the radial parts of $ \DiffH{k, a} $, $ \DiffEp{k, a} $,
and $ \DiffEm{k, a} $ respectively in the following sense.

\begin{proposition}\label{thm:radial}
  Let $ k $ be a (not necessarily non-negative) multiplicity function,
  $ a \in \setC \setminus \setenum{0} $, and $ m \in \setN $.
  For $ p \in \mcalH_k^m(S^{N - 1}) $ and $ f \in C^\infty(\setRp) $, we have
  \begin{align*}
    \DiffH{k, a}  (p \otimes f) &= p \otimes \DiffH{k, a}[m]   f, \\
    \DiffEp{k, a} (p \otimes f) &= p \otimes \DiffEp{k, a}[m] f, \\
    \DiffEm{k, a} (p \otimes f) &= p \otimes \DiffEm{k, a}[m] f,
  \end{align*}
  where $ p \otimes f $ denotes the function $ r\omega \mapsto p(\omega) f(r) $
  on $ \setR^N \setminus \setenum{0} $.
\end{proposition}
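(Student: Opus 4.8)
The plan is to reduce all three identities to the action of the three elementary building blocks out of which $ \DiffH{k, a} $, $ \DiffEp{k, a} $, and $ \DiffEm{k, a} $ are assembled—namely the Euler operator $ E $, multiplication by a power of $ \enorm{x} $, and the Dunkl Laplacian $ \Laplacian_k $—on a separated function $ p \otimes f $. The identities for $ \DiffH{k, a} $ and $ \DiffEp{k, a} $ are elementary, whereas the one for $ \DiffEm{k, a} $ carries all the content.

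First I would dispose of $ \DiffEp{k, a} $: it is multiplication by the radial function $ \frac{i}{a} \enorm{x}^a = \frac{i}{a} r^a $, so $ \DiffEp{k, a}(p \otimes f) = p \otimes \frac{i}{a} r^a f = p \otimes \DiffEp{k, a}[m] f $ at once. For $ \DiffH{k, a} $ it suffices to show $ E(p \otimes f) = p \otimes \vartheta f $. This follows because $ E $ generates the dilations $ x \mapsto e^t x $, which fix the angular variable $ \omega = x / \enorm{x} $ and scale $ r = \enorm{x} $; differentiating $ p(\omega) f(e^t r) $ at $ t = 0 $ gives $ p(\omega) \, r f'(r) = p \otimes \vartheta f $. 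Combining with the scalar term yields $ \DiffH{k, a}(p \otimes f) = p \otimes \DiffH{k, a}[m] f $.

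The heart of the matter is $ \DiffEm{k, a} $, for which I would invoke the polar decomposition of the Dunkl Laplacian. The key geometric point is that every reflection $ r_\alpha $ lies in $ O(N) $ and hence preserves $ r = \enorm{x} $ while acting linearly, so $ r_\alpha(r\omega) = r \, r_\alpha(\omega) $ with $ r_\alpha(\omega) $ again on the sphere; consequently the radial factor $ f(r) $ passes untouched through each difference quotient $ (g(x) - g(r_\alpha x)) / \innprod{\alpha}{x}^2 $, while the angular factor is acted on alone. This is precisely what makes $ \Laplacian_k $ respect the radial–angular splitting, in the form
\[
  \Laplacian_k
  = \frac{d^2}{dr^2} + \frac{N - 1 + 2 \dunklindex{k}}{r} \frac{d}{dr}
    + \frac{1}{r^2} \Laplacian_{k, S},
\]
where $ \Laplacian_{k, S} $ denotes the Dunkl Laplacian on the sphere, and any $ p \in \mcalH_k^m(S^{N - 1}) $ is an eigenfunction with $ \Laplacian_{k, S} \, p = -m(m + N - 2 + 2 \dunklindex{k}) \, p $. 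The eigenvalue is pinned down by applying the decomposition to the $ k $-harmonic polynomial $ r^m p(\omega) $, whose Dunkl Laplacian vanishes by the very definition of $ \mcalH_k^m $. Hence $ \Laplacian_k(p \otimes f) = p \otimes D f $ with $ D = \frac{d^2}{dr^2} + \frac{N - 1 + 2 \dunklindex{k}}{r} \frac{d}{dr} - \frac{m(m + N - 2 + 2 \dunklindex{k})}{r^2} $.

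It then remains to match $ D $ against the radial operator. Using $ r \frac{d}{dr} = \vartheta $ and $ r^2 \frac{d^2}{dr^2} = \vartheta(\vartheta - 1) $, a short computation gives
\[
  r^2 D
  = \vartheta^2 + (N - 2 + 2 \dunklindex{k}) \vartheta - m(m + N - 2 + 2 \dunklindex{k})
  = (\vartheta - m)(\vartheta + N - 2 + 2 \dunklindex{k} + m),
\]
so that $ \DiffEm{k, a}(p \otimes f) = \frac{i}{a} \enorm{x}^{2 - a}(p \otimes D f) = p \otimes \frac{i}{a} r^{-a} (r^2 D) f = p \otimes \DiffEm{k, a}[m] f $, as desired. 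I expect the main obstacle to be establishing the polar decomposition of $ \Laplacian_k $ together with the eigenvalue identity for $ k $-spherical harmonics; once these standard facts of Dunkl theory are in hand, the remaining steps are purely algebraic manipulations with $ \vartheta $.
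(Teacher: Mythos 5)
Your proposal is correct and follows the same route as the paper: the first two identities are immediate, and the third rests on the polar decomposition of $ \Laplacian_k $ together with the eigenvalue $ -m(m + N - 2 + 2\dunklindex{k}) $ of its spherical part on $ \mcalH_k^m(S^{N-1}) $. The only difference is that the paper outsources exactly this content to the second equation of \cite[Lemma~3.6]{BKO2012}, whereas you derive it (correctly, including the identity $ r^2 \frac{d^2}{dr^2} + (N - 1 + 2\dunklindex{k})\vartheta - m(m + N - 2 + 2\dunklindex{k}) = (\vartheta - m)(\vartheta + N - 2 + 2\dunklindex{k} + m) $ after the substitution $ r^2\frac{d^2}{dr^2} = \vartheta(\vartheta - 1) $).
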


\begin{proof}
  The first and second equations are obvious. The last equation follows from
  the second equation in \cite[Lemma~3.6]{BKO2012}.
\end{proof}

In particular, $ \DiffH{k, a}[m] $, $ \DiffEp{k, a}[m] $, and
$ \DiffEm{k, a}[m] $ also form an $ \mathfrak{sl}_2 $-triple,
and hence yield a Lie algebra representation
\[
  \map{\rep{k, a}[m]}{\mathfrak{sl}(2, \setR)}{\End_{\setC}(C^\infty(\setRp))}.
\]
We again write $ \rep{k, a}[m] $ for its complexification.

\subsection{\texorpdfstring{$ L^2 $}{L2}-theory for the \texorpdfstring{$ \mathfrak{sl}_2 $}{sl2}-triple}
\label{ssec:l2-theory}

For a non-negative multiplicity function $ k $ and $ a \in \setR $, we consider
a weight function
\begin{equation}
  w_{k, a}(x)
  = \enorm{x}^{a - 2} \prod_{\alpha \in \mscrR^+} \abs{\innprod{\alpha}{x}}^{2k_\alpha}
  \quad (x \in \setR^N \setminus \setenum{0}),
  \label{eq:weight}
\end{equation}
and the Hilbert space $ L^2(\setR^N, w_{k, a}(x) \,dx) $.
By the polar decomposition $ w_{k, a}(x) \,dx
= w_k(\omega) \,d\omega \otimes r^{N - 3 + 2 \dunklindex{k} + a} \,dr $ and
\cref{thm:decomposition}, we have the Hilbert sum decomposition
\begin{align}
  L^2(\setR^N, w_{k, a}(x) \,dx)
  &= L^2(S^{N - 1}, w_k(\omega) \,d\omega)
    \otimeshat L^2(\setRp, r^{N - 3 + 2 \dunklindex{k} + a} \,dr) \notag \\
  &= \sumoplus_{m \in \setN} \mcalH_k^m(S^{N - 1})
    \otimes L^2(\setRp, r^{N - 3 + 2 \dunklindex{k} + a} \,dr).
  \label{eq:decomposition}
\end{align}

Henceforth, for $ \lambda \in \setC $ and $ l \in \setN $,
we write the Laguerre polynomial as
\[
  L^{(\lambda)}_l(t)
  = \sum_{j = 0}^{l} \frac{(-1)^j}{j!} \binom{\lambda + l}{l - j} t^j
  = \sum_{j = 0}^{l} \frac{(-1)^j}{j! (l - j)!} \frac{\Gamma(\lambda + l + 1)}{\Gamma(\lambda + j + 1)} t^j
  \quad (t \in \setRp).
\]

\begin{proposition}[{\cite[Proposition~3.15]{BKO2012}}]\label{thm:onb}
  Let $ k $ be a non-negative multiplicity function, $ a > 0 $,
  and $ m \in \setN $, and set
  \[
    \lambda_{k, a, m} = \frac{N - 2 + 2 \dunklindex{k} + 2m}{a}.
  \]
  For $ l \in \setN $, we define a function $ f_{k, a, m; l} $ on $ \setRp $ by
  \[
    f_{k, a, m; l}(r)
    = \Paren*{\frac{2^{\lambda_{k, a, m} + 1} \Gamma(l + 1)}{a^{\lambda_{k, a, m}} \Gamma(\lambda_{k, a, m} + l + 1)}}^{1/2}
      r^m
      L^{(\lambda_{k, a, m})}_l \Paren*{\frac{2}{a} r^a}
      \exp \Paren*{-\frac{1}{a} r^a}
    \quad (r \in \setRp).
  \]
  If $ \lambda_{k, a, m} > -1 $, $ \faml{f_{k, a, m; l}}{l \in \setN} $ is an
  orthonormal basis for $ L^2(\setRp, r^{N - 3 + 2 \dunklindex{k} + a} \,dr) $.
  \qed
\end{proposition}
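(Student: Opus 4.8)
The plan is to transport the entire statement to the classical theory of Laguerre polynomials by a single change of variables, so that orthonormality becomes the Laguerre orthogonality relation and the basis property becomes the completeness of the Laguerre system. Writing $ \lambda = \lambda_{k, a, m} $ for brevity, the two facts I would invoke, both valid precisely because $ \lambda > -1 $, are the orthogonality relation
\[
  \int_0^\infty L^{(\lambda)}_l(t)\, L^{(\lambda)}_{l'}(t)\, t^\lambda e^{-t} \,dt
  = \frac{\Gamma(\lambda + l + 1)}{\Gamma(l + 1)}\, \delta_{l, l'},
\]
and the completeness of $ \faml{L^{(\lambda)}_l}{l \in \setN} $ as an orthogonal system in $ L^2(\setRp, t^\lambda e^{-t} \,dt) $. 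Equivalently, the normalized functions $ e_l = (\Gamma(l + 1) / \Gamma(\lambda + l + 1))^{1/2} L^{(\lambda)}_l $ form an orthonormal basis of this space.

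The geometric heart of the argument is the substitution $ t = \frac{2}{a} r^a $, which for $ a > 0 $ is a smooth increasing bijection of $ \setRp $ onto itself. First I would record, via this substitution, the measure identity
\[
  r^{2m + N - 3 + 2 \dunklindex{k} + a} \,dr
  = \frac{1}{a} \Paren*{\frac{a}{2}}^{\lambda + 1} t^\lambda \,dt,
\]
which rests on the relation $ 2m + N - 2 + 2 \dunklindex{k} + a = a(\lambda + 1) $, immediate from the definition of $ \lambda $; at the same time the Gaussian factor transforms as $ \exp(-\frac{1}{a} r^a) = e^{-t/2} $. Guided by this, I would introduce the operator $ \map{U}{L^2(\setRp, t^\lambda e^{-t} \,dt)}{L^2(\setRp, r^{N - 3 + 2 \dunklindex{k} + a} \,dr)} $ defined by
\[
  (U g)(r) = \Paren*{\frac{2^{\lambda + 1}}{a^\lambda}}^{1/2} r^m \exp\Paren*{-\frac{1}{a} r^a}\, g\Paren*{\frac{2}{a} r^a},
\]
and verify from the measure identity that it is an isometry. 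Because the substitution is a bijection of $ \setRp $ and the prefactor $ r^m \exp(-\frac{1}{a} r^a) $ never vanishes, $ U $ is also surjective, hence unitary.

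It then remains only to match basis elements: a direct comparison of the normalization constants shows that $ U $ carries each $ e_l $ precisely to $ f_{k, a, m; l} $, so that $ \faml{f_{k, a, m; l}}{l \in \setN} $, being the image of an orthonormal basis under a unitary operator, is itself an orthonormal basis of $ L^2(\setRp, r^{N - 3 + 2 \dunklindex{k} + a} \,dr) $. In this scheme the orthogonality relation and the measure identity are routine computations; the one genuinely non-elementary ingredient, and the step I expect to be the main obstacle, is the \emph{completeness} of the Laguerre system, i.e.\ that the $ L^{(\lambda)}_l $ span a dense subspace. This is exactly where the hypothesis $ \lambda > -1 $ is indispensable, and the cleanest route I know is to show that any $ \phi \in L^2(\setRp, t^\lambda e^{-t} \,dt) $ orthogonal to every $ L^{(\lambda)}_l $ is orthogonal to all polynomials, whence an associated Laplace-type transform of $ \phi\, t^\lambda e^{-t} $ vanishes identically and forces $ \phi = 0 $.
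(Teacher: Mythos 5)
Your argument is correct and is essentially the expected one: the paper itself gives no proof of this proposition but cites \cite[Proposition~3.15]{BKO2012}, whose proof proceeds exactly as you propose, transporting the orthogonality relation and the completeness of the Laguerre system $\faml{L^{(\lambda)}_l}{l \in \setN}$ in $L^2(\setRp, t^{\lambda} e^{-t} \,dt)$ (valid for $\lambda > -1$) through the substitution $t = \tfrac{2}{a} r^a$ via the unitary operator $U$. One point your computation actually exposes: for $U e_l = f_{k, a, m; l}$ to hold, the normalization constant in the definition of $f_{k, a, m; l}$ must carry the exponent $\tfrac{1}{2}$, as it does in the cited source; as printed in the statement above the square root is missing, and with the constant taken literally the functions would be orthogonal but not normalized.
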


Let
\[
  \mbfh   = \begin{pmatrix} 1 & 0 \\ 0 & -1 \end{pmatrix}, \quad
  \mbfe^+ = \begin{pmatrix} 0 & 1 \\ 0 & 0 \end{pmatrix}, \quad
  \mbfe^- = \begin{pmatrix} 0 & 0 \\ 1 & 0 \end{pmatrix}
\]
be the standard basis for $ \mathfrak{sl}(2, \setR) $, and
\begin{align*}
  \mbfk
  &= \Ad\Paren*{\frac{1}{\sqrt{2}} \begin{pmatrix} -i & -1 \\ 1 & i \end{pmatrix}} \mbfh
  = \begin{pmatrix} 0 & -i \\ i & 0 \end{pmatrix}, \\
  \mbfn^+
  &= \Ad\Paren*{\frac{1}{\sqrt{2}} \begin{pmatrix} -i & -1 \\ 1 & i \end{pmatrix}} \mbfe^+
  = \frac{1}{2} \begin{pmatrix} i & -1 \\ -1 & -i \end{pmatrix}, \\
  \mbfn^-
  &= \Ad\Paren*{\frac{1}{\sqrt{2}} \begin{pmatrix} -i & -1 \\ 1 & i \end{pmatrix}} \mbfe^-
  = \frac{1}{2} \begin{pmatrix} -i & -1 \\ -1 & i \end{pmatrix}
\end{align*}
be the Cayley transforms of them.

\begin{proposition}\label{thm:action}
  Let $ k $ be a non-negative multiplicity function, $ a > 0 $, and $ m \in \setN $
  with $ \lambda_{k, a, m} = \frac{N - 2 + 2 \dunklindex{k} + 2m}{a} > -1 $.
  For $ l \in \setN $, we have
  \begin{align*}
    \rep{k, a}[m](\mbfk)   f_{k, a, m; l} &= (\lambda_{k, a, m} + 2l + 1) f_{k, a, m; l}, \\
    \rep{k, a}[m](\mbfn^+) f_{k, a, m; l} &= i \sqrt{(l + 1)(\lambda_{k, a, m} + l + 1)} \, f_{k, a, m; l + 1}, \\
    \rep{k, a}[m](\mbfn^-) f_{k, a, m; l} &= i \sqrt{l (\lambda_{k, a, m} + l)} \, f_{k, a, m; l - 1},
  \end{align*}
  where we regard $ f_{k, a, m; -1} = 0 $.
\end{proposition}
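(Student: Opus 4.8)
The plan is to turn the whole statement into a collection of classical identities for Laguerre polynomials, after two preliminary reductions: expressing the Cayley-transformed elements in the standard basis, and stripping off the angular factor $r^m$. First I would read off from the matrix definitions, by a routine $2\times 2$ computation, that
\[
\mbfk = i(\mbfe^- - \mbfe^+), \qquad \mbfn^\pm = \tfrac12\Paren*{\pm i\mbfh - \mbfe^+ - \mbfe^-}.
\]
Applying the (complexified) homomorphism $\rep{k,a}[m]$, which sends $\mbfh,\mbfe^+,\mbfe^-$ to $\DiffH{k,a}[m],\DiffEp{k,a}[m],\DiffEm{k,a}[m]$, this gives
\[
\rep{k,a}[m](\mbfk) = i\Paren*{\DiffEm{k,a}[m] - \DiffEp{k,a}[m]},
\]
\[
\rep{k,a}[m](\mbfn^\pm) = \tfrac12\Paren*{\pm i\DiffH{k,a}[m] - \DiffEp{k,a}[m] - \DiffEm{k,a}[m]},
\]
so the proposition becomes a statement about three explicit operators acting on $f_{k,a,m;l}$.

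Next, writing $\lambda = \lambda_{k,a,m}$ and setting $t = \tfrac2a r^a$, I would verify $\vartheta = a\,\theta_t$ with $\theta_t = t\tfrac{d}{dt}$, together with $r^a = \tfrac a2 t$ and $r^{-a} = \tfrac{2}{at}$. Using the commutation rule $\vartheta(r^m h) = r^m(m+\vartheta)h$ and the relation $N-2+2\dunklindex{k}+2m = a\lambda$, the key simplification is
\[
(\vartheta - m)\Paren*{\vartheta + N-2+2\dunklindex{k}+m}(r^m g) = r^m\,\vartheta\Paren*{\vartheta + a\lambda}g = a^2\,r^m\,\theta_t(\theta_t+\lambda)g
\]
for any $g=g(t)$. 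Writing $g_l(t) = L^{(\lambda)}_l(t)\,e^{-t/2}$, so that $f_{k,a,m;l} = c_{m,l}\,r^m g_l$ with the normalizing constant $c_{m,l}$ of \cref{thm:onb}, each operator then acts on $r^m g_l$ as $r^m$ times an ordinary differential operator in $t$ applied to $g_l$.

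Carrying this out, the claims reduce to
\[
\left[-\tfrac2t\theta_t(\theta_t+\lambda)+\tfrac t2\right]g_l = (\lambda+2l+1)g_l
\]
for $\mbfk$, and $\rep{k,a}[m](\mbfn^\pm)(r^m g_l) = i\,r^m A^\pm g_l$ with
\[
A^\pm = \tfrac12\left[\pm\Paren*{(\lambda+1)+2\theta_t} - \tfrac t2 - \tfrac2t\theta_t(\theta_t+\lambda)\right],
\]
where I expect $A^+ g_l = (l+1)g_{l+1}$ and $A^- g_l = (\lambda+l)g_{l-1}$. To prove these I would substitute $g_l = L^{(\lambda)}_l e^{-t/2}$, compute $g_l'$ and $g_l''$, and use $\theta_t(\theta_t+\lambda)g_l = \bigl(t^2\tfrac{d^2}{dt^2} + (1+\lambda)t\tfrac{d}{dt}\bigr)g_l$ to rewrite everything as a combination of $L^{(\lambda)}_l$ and its first two derivatives. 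Eliminating the second derivative by Laguerre's equation $t L^{(\lambda)\prime\prime}_l + (\lambda+1-t)L^{(\lambda)\prime}_l + l L^{(\lambda)}_l = 0$ collapses the $\mbfk$-line at once to the eigenvalue $\lambda+2l+1$. For $A^\pm$ I would additionally invoke the derivative relation $t L^{(\lambda)\prime}_l = l L^{(\lambda)}_l - (l+\lambda)L^{(\lambda)}_{l-1}$ and the three-term recurrence $(l+1)L^{(\lambda)}_{l+1} = (2l+\lambda+1-t)L^{(\lambda)}_l - (l+\lambda)L^{(\lambda)}_{l-1}$, which produce the raising and lowering values respectively.

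Finally I would restore the constants: since $c_{m,l+1}/c_{m,l} = \sqrt{(l+1)/(\lambda+l+1)}$ and $c_{m,l-1}/c_{m,l} = \sqrt{(\lambda+l)/l}$, the integer coefficients $(l+1)$ and $(\lambda+l)$ become the symmetric ladder coefficients $\sqrt{(l+1)(\lambda+l+1)}$ and $\sqrt{l(\lambda+l)}$, while the $\mbfk$-eigenvalue is unaffected by normalization. The main obstacle is the third step: organizing the second-order differential computation on $g_l = L^{(\lambda)}_l e^{-t/2}$ and choosing the right combination of the three Laguerre identities so that the expressions telescope; everything else is bookkeeping. As a guiding check one may note abstractly that $f_{k,a,m;0}$ is a lowest weight vector, i.e. $\rep{k,a}[m](\mbfn^-)f_{k,a,m;0}=0$, of $\mbfk$-eigenvalue $\lambda+1$, which fixes the entire $\mathfrak{sl}_2$-ladder up to the normalization handled above.
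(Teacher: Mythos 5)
Your proof is correct, but it takes a genuinely different route from the paper: the paper does not carry out this computation at all --- its entire proof is the remark that, by \cref{thm:radial}, the radial operators $ \DiffH{k, a}[m] $, $ \DiffEp{k, a}[m] $, $ \DiffEm{k, a}[m] $ are the restrictions of $ \DiffH{k, a} $, $ \DiffEp{k, a} $, $ \DiffEm{k, a} $ to the $ \mcalH_k^m $-isotypic components, so the assertion ``is essentially the same as'' Theorem~3.21 of \cite{BKO2012}, which is quoted as a known fact. What you supply instead is a self-contained verification, and I checked that it closes: the expressions $ \mbfk = i(\mbfe^- - \mbfe^+) $ and $ \mbfn^\pm = \tfrac{1}{2}(\pm i\mbfh - \mbfe^+ - \mbfe^-) $ are right; the substitution $ t = \tfrac{2}{a} r^a $ gives $ \vartheta = a\,\theta_t $ and $ (\vartheta - m)(\vartheta + N - 2 + 2\dunklindex{k} + m)(r^m g) = a^2 r^m \theta_t(\theta_t + \lambda) g $ as you claim; Laguerre's equation collapses the $ \mbfk $-line to the eigenvalue $ \lambda + 2l + 1 $; and the derivative relation together with the three-term recurrence yield $ A^+ g_l = (l + 1) g_{l + 1} $ and $ A^- g_l = (\lambda + l) g_{l - 1} $, after which the normalization ratios produce the symmetric coefficients $ \sqrt{(l + 1)(\lambda + l + 1)} $ and $ \sqrt{l(\lambda + l)} $. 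Your approach buys a proof readable without opening \cite{BKO2012} (whose own proof of Theorem~3.21 is, in substance, the computation you describe), at the cost of length; the paper buys brevity by outsourcing. One small caveat: your ratio $ c_{m, l+1}/c_{m, l} = \sqrt{(l + 1)/(\lambda + l + 1)} $ presupposes that the constant in \cref{thm:onb} carries an exponent $ \tfrac{1}{2} $, as it does in \cite{BKO2012}; the displayed formula in this paper omits the square root (apparently a typo), and with that reading your bookkeeping is exact.
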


\begin{proof}
  By \cref{thm:radial}, the assertion is essentially the same as
  \cite[Theorem~3.21]{BKO2012}.
\end{proof}

In particular, we determine the spectra of the differential-difference
operators $ \rep{k, a}[m](\mbfk) $ and $ \rep{k, a}(\mbfk) $,
which play a fundamental role for the definitions of $ (k, a) $-generalized
Laguerre semigroups and Fourier transforms (\cref{ssec:ls,ssec:ft}).

\begin{corollary}\label{thm:radial-spectrum}
  Let $ k $ be a non-negative multiplicity function, $ a > 0 $, and $ m \in \setN $
  with $ \lambda_{k, a, m} = \frac{N - 2 + 2 \dunklindex{k} + 2m}{a} > -1 $.
  The differential-difference operator
  \[
    \rep{k, a}[m](\mbfk)
    = \frac{1}{a} (r^a - r^{-a} (\vartheta - m) (\vartheta + N - 2 + 2 \dunklindex{k} + m))
  \]
  restricted to $ W_{k, a}^{(m)} = \lspan_{\setC} \set{f_{k, a, m; l}}{l \in \setN} $
  is essentially self-adjoint on $ L^2(\setRp, r^{N - 3 + 2 \dunklindex{k} + a} \,dr) $
  and diagonalized by the orthonormal basis $ \faml{f_{k, a, m; l}}{l \in \setN} $.
  The discrete spectrum corresponding to $ f_{k, a, m; l} $ is
  $ \lambda_{k, a, m} + 2l + 1 $.
\end{corollary}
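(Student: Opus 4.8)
The plan is to read everything off from \cref{thm:action,thm:onb} together with the standard deficiency-index criterion for essential self-adjointness; there is no hard computation left to do. Throughout, write $ A $ for the operator $ \rep{k, a}[m](\mbfk) $ equipped with the domain $ W_{k, a}^{(m)} = \lspan_{\setC} \set{f_{k, a, m; l}}{l \in \setN} $.

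First I would invoke \cref{thm:action}, whose first line states exactly $ A f_{k, a, m; l} = (\lambda_{k, a, m} + 2l + 1) f_{k, a, m; l} $ for every $ l \in \setN $. This already gives the diagonalization and identifies the eigenvalue attached to $ f_{k, a, m; l} $ as $ \lambda_{k, a, m} + 2l + 1 $. Next, since $ \lambda_{k, a, m} > -1 $, \cref{thm:onb} guarantees that $ \faml{f_{k, a, m; l}}{l \in \setN} $ is an orthonormal basis of $ L^2(\setRp, r^{N - 3 + 2 \dunklindex{k} + a} \,dr) $; in particular $ W_{k, a}^{(m)} $ is dense. Because $ A $ acts diagonally with real eigenvalues in an orthonormal basis, a one-line computation on finite linear combinations shows that $ A $ is symmetric on $ W_{k, a}^{(m)} $.

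It then remains to upgrade symmetry to essential self-adjointness, which I would do via the criterion $ \ker(A^* - i) = \ker(A^* + i) = \setenum{0} $. Suppose $ g \in D(A^*) $ satisfies $ A^* g = \pm i g $. Pairing with a basis vector and using the definition of the adjoint together with $ A f_{k, a, m; l} = (\lambda_{k, a, m} + 2l + 1) f_{k, a, m; l} $ yields $ (\lambda_{k, a, m} + 2l + 1) \innprod{f_{k, a, m; l}}{g} = \innprod{f_{k, a, m; l}}{A^* g} = \mp i \innprod{f_{k, a, m; l}}{g} $ for every $ l $. Since $ \lambda_{k, a, m} + 2l + 1 $ is real it cannot equal $ \mp i $, so each pairing $ \innprod{f_{k, a, m; l}}{g} $ vanishes, and hence $ g = 0 $ because the $ f_{k, a, m; l} $ form an orthonormal basis. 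Both deficiency spaces being trivial, $ A $ is essentially self-adjoint, and its closure is the self-adjoint operator acting by $ \lambda_{k, a, m} + 2l + 1 $ on each $ f_{k, a, m; l} $.

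There is no genuine obstacle here once \cref{thm:action,thm:onb} are available: the mathematical content is the elementary functional-analytic fact that a symmetric operator diagonalized by an orthonormal basis with real eigenvalues is essentially self-adjoint. The only point deserving a sentence of care is the vanishing of the deficiency spaces, which is exactly the short eigenvector computation displayed above.
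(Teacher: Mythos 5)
Your argument is correct and follows exactly the paper's route: the paper's own proof of this corollary is the one-line statement that it follows from \cref{thm:onb} and \cref{thm:action}, leaving implicit the standard fact that a symmetric operator diagonalized with real eigenvalues by an orthonormal basis is essentially self-adjoint. You have simply made that implicit functional-analytic step (the deficiency-index computation) explicit, and it is carried out correctly.
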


\begin{proof}
  It follows from \cref{thm:onb} and \cref{thm:action}.
\end{proof}

\begin{corollary}[{\cite[Corollary~3.22]{BKO2012}}]\label{thm:total-spectrum}
  Let $ k $ be a non-negative multiplicity function and $ a > 0 $
  with $ \lambda_{k, a, 0} = \frac{N - 2 + 2 \dunklindex{k}}{a} > -1 $.
  The differential-difference operator
  \[
    \rep{k, a}(\mbfk)
    = \frac{1}{a} (\enorm{x}^a - \enorm{x}^{2 - a} \Laplacian_k)
  \]
  restricted to $ W_{k, a} = \bigoplus_{m \in \setN} \mcalH_k^m(S^{N - 1}) \otimes W_{k, a}^{(m)} $
  is essentially self-adjoint on $ L^2(\setR^N, w_{k, a}(x) \,dx) $,
  has no continuous spectra, and has the set of discrete spectra
  \[
    \begin{cases}
      \set{\lambda_{k, a, m} + 2l + 1}{\text{$ m $, $ l \in \setN $}} & (N \geq 2) \\
      \set{\frac{2 \dunklindex{k} \pm 1}{a} + 2l + 1}{l \in \setN}    & (N = 1).
    \end{cases}
  \]
\end{corollary}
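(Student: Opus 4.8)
The plan is to deduce \cref{thm:total-spectrum} from the radial spectral analysis of \cref{thm:radial-spectrum} by assembling the decomposition \eqref{eq:decomposition} block by block. First I would use \cref{thm:radial}, which tells us that on each summand $\mcalH_k^m(S^{N-1}) \otimes L^2(\setRp, r^{N-3+2\dunklindex{k}+a}\,dr)$ the operator $\rep{k,a}(\mbfk)$ acts as $\Id \otimes \rep{k,a}[m](\mbfk)$. Indeed, writing $\rep{k,a}(\mbfk) = \frac{1}{a}(\enorm{x}^a - \enorm{x}^{2-a}\Laplacian_k)$ (which is just $-ia$ times $\DiffEp{k,a} + \DiffEm{k,a}$ under the identifications of \cref{ssec:sl2-triple}), \cref{thm:radial} gives $\rep{k,a}(\mbfk)(p \otimes f) = p \otimes \rep{k,a}[m](\mbfk) f$ for $p \in \mcalH_k^m(S^{N-1})$. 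This reduces the global problem to the radial one on each block.

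Next I would invoke \cref{thm:radial-spectrum} on each $m$. For $N \geq 2$ and every $m \in \setN$ one checks $\lambda_{k,a,m} = \frac{N-2+2\dunklindex{k}+2m}{a} > -1$, since the hypothesis $\lambda_{k,a,0} > -1$ and $a > 0$ force $\lambda_{k,a,m} = \lambda_{k,a,0} + \frac{2m}{a} > -1$ for all $m \geq 0$. Thus \cref{thm:radial-spectrum} applies to every block: the restriction of $\rep{k,a}[m](\mbfk)$ to $W_{k,a}^{(m)}$ is essentially self-adjoint on $L^2(\setRp, r^{N-3+2\dunklindex{k}+a}\,dr)$ with purely discrete spectrum $\set{\lambda_{k,a,m}+2l+1}{l \in \setN}$ and orthonormal eigenbasis $\faml{f_{k,a,m;l}}{l \in \setN}$. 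Assembling the $\mcalH_k^m(S^{N-1}) \otimes W_{k,a}^{(m)}$ over all $m$, the collection $\set{p \otimes f_{k,a,m;l}}{p \text{ running over an ONB of } \mcalH_k^m(S^{N-1}),\ m,l \in \setN}$ is an orthonormal basis of $L^2(\setR^N, w_{k,a}(x)\,dx)$ consisting of eigenvectors of $\rep{k,a}(\mbfk)$ restricted to $W_{k,a}$, with eigenvalue $\lambda_{k,a,m}+2l+1$ on the block indexed by $(m,l)$. This simultaneously establishes essential self-adjointness on the algebraic direct sum $W_{k,a}$, the absence of continuous spectrum, and the stated eigenvalue set $\set{\lambda_{k,a,m}+2l+1}{m,l \in \setN}$.

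For the exceptional case $N = 1$ I would treat the Coxeter group separately: here $\mscrR = \setenum{\pm\alpha}$ and $\mfrakC = \setenum{\pm 1}$, so $L^2(S^0, w_k\,d\omega)$ is two-dimensional, spanned by the even and odd functions on the two-point sphere $S^0 = \setenum{\pm 1}$. These correspond to $k$-spherical harmonics of degrees $m = 0$ (the trivial, even harmonic) and $m = 1$ (the sign function, the odd harmonic), and $\mcalH_k^m(S^0) = 0$ for $m \geq 2$. Substituting $m = 0$ and $m = 1$ into $\lambda_{k,a,m} = \frac{N-2+2\dunklindex{k}+2m}{a}$ with $N = 1$ gives $\lambda_{k,a,0} = \frac{2\dunklindex{k}-1}{a}$ and $\lambda_{k,a,1} = \frac{2\dunklindex{k}+1}{a}$, so the eigenvalues $\lambda_{k,a,m}+2l+1$ collapse to exactly $\frac{2\dunklindex{k}\pm 1}{a}+2l+1$ with $l \in \setN$, matching the stated $N = 1$ formula.

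The step I expect to carry the real weight is the passage from essential self-adjointness on each finite radial block to essential self-adjointness of $\rep{k,a}(\mbfk)$ on the full algebraic sum $W_{k,a}$, together with the assertion that there is genuinely no continuous spectrum. \cref{thm:radial-spectrum} gives essential self-adjointness of each radial restriction, but one must argue that the orthogonal direct sum of essentially self-adjoint operators with a common orthonormal eigenbasis is again essentially self-adjoint and that the closure has purely discrete spectrum; this is where I would appeal to the standard fact that an operator admitting a complete orthonormal system of eigenvectors in its domain is essentially self-adjoint with spectrum equal to the closure of its eigenvalue set. Since this is precisely \cite[Corollary~3.22]{BKO2012}, the cleanest route is to reduce to \cref{thm:radial-spectrum} (our reformulation of the block-wise analysis) and cite the reference for the global assembly; the only genuinely new verification is the bookkeeping of which $(m,l)$ indices occur, handled above, and the low-dimensional degeneracy for $N = 1$.
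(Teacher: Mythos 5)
Your proof follows the paper's own argument exactly: reduce via \cref{thm:radial} and the Hilbert sum decomposition \cref{eq:decomposition} to the radial result \cref{thm:radial-spectrum} on each block (checking $\lambda_{k,a,m} > -1$ for all $m$ from the hypothesis at $m=0$), and handle $N=1$ by noting $\mcalH_k^m(S^0) = 0$ for $m \geq 2$. One inconsequential slip: $\rep{k, a}(\mbfk) = i(\DiffEm{k, a} - \DiffEp{k, a})$, not $-ia$ times $\DiffEp{k, a} + \DiffEm{k, a}$, but your argument only needs that $\mbfk$ is a linear combination of $\mbfh$, $\mbfe^+$, $\mbfe^-$, so nothing is affected.
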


\begin{proof}
  It follows from \cref{thm:radial-spectrum} combined with the Hilbert sum
  decomposition \cref{eq:decomposition} and \cref{thm:radial}.
  (Note that $ \mcalH_k^m(S^0) \neq \zeromod $ only for $ m = 0 $, $ 1 $
  when $ N = 1 $.)
\end{proof}

\subsection{Lifting to a unitary representation \texorpdfstring{$ \Rep{k, a} $}{Ωk, a}}
\label{ssec:lifting}

In this subsection, we write $ \widetilde{\mathit{SL}}(2, \setR) $
for the universal covering Lie group of $ \mathit{SL}(2, \setR) $,
and $ \widetilde{\mathit{SO}}(2) $ for the connected Lie subgroup of
$ \widetilde{\mathit{SL}}(2, \setR) $ with Lie algebra
$ \mathfrak{so}(2) = \setR i\mbfk $.

To clearify the assertions below, we pin down some basic definitions and facts
about $ (\mathfrak{sl}(2, \setC), \widetilde{\mathit{SO}}(2)) $-modules as in \cite{BKO2012}.
Let $ (\varpi, V) $ be a $ (\mathfrak{sl}(2, \setC), \widetilde{\mathit{SO}}(2)) $-module.
We say that $ v \in V \setminus \setenum{0} $ is a \emph{lowest weight vector
of weight $ \mu \in \setC $} if
\[
  \varpi(\mbfk) v = \mu v
  \quad \text{and} \quad
  \varpi(\mbfn^-) v = 0.
\]
We say that $ (\varpi, V) $ is a \emph{lowest weight module of weight $ \mu $}
if $ V $ is generated by such $ v $.
It is known that, for each $ \lambda \in \setC $, there exists a unique
irreducible lowest weight module of weight $ \lambda + 1 $ (up to isomorphism),
for which we write $ \pi_{\widetilde{\mathit{SO}}(2)}(\lambda) $~\cite[p.\,28]{BKO2012}.
Moreover, for $ \lambda \geq -1 $, $ \pi_{\widetilde{\mathit{SO}}(2)}(\lambda) $
lifts to a unique irreducible unitary representation $ \pi(\lambda) $
of $ \widetilde{\mathit{SL}}(2, \setR) $,
which is infinite-dimensional except for the one-dimensional trivial representation
$ \pi(-1) $~\cite[Fact~3.27, 1), 2), 7)]{BKO2012}.

\begin{theorem}\label{thm:radial-lifting}
  Let $ k $ be a non-negative multiplicity function, $ a > 0 $, and $ m \in \setN $
  with $ \lambda_{k, a, m} = \frac{N - 2 + 2 \dunklindex{k} + 2m}{a} > -1 $.
  We set
  \[
    W_{k, a}^{(m)}
    = \lspan_{\setC} \set{f_{k, a, m; l}}{l \in \setN}.
  \]
  Then, $ (\rep{k, a}[m], W_{k, a}^{(m)}) $ equips a natural
  $ (\mathfrak{sl}(2, \setC), \widetilde{\mathit{SO}}(2)) $-module structure,
  and is isomorphic to $ \pi_{\widetilde{\mathit{SO}}(2)}(\lambda_{k, a, m}) $.
  Moreover, the $ (\mathfrak{sl}(2, \setC), \widetilde{\mathit{SO}}(2)) $-module
  $ (\rep{k, a}[m], W_{k, a}^{(m)}) $ lifts to a unique unitary representation
  $ \Rep{k, a}[m] $ of $ \widetilde{\mathit{SL}}(2, \setR) $
  on the Hilbert space $ L^2(\setRp, r^{N - 3 + 2 \dunklindex{k} + a} \,dr) $,
  which is unitarily equivalent to $ \pi(\lambda_{k, a, m}) $.
\end{theorem}

\begin{theorem}\label{thm:total-lifting}
  Let $ k $ be a non-negative multiplicity function and $ a > 0 $
  with $ \lambda_{k, a, 0} = \frac{N - 2 + 2 \dunklindex{k}}{a} > -1 $.
  We set
  \[
    W_{k, a}
    = \bigoplus_{m \in \setN} \mcalH_k^m(S^{N - 1}) \otimes W_{k, a}^{(m)}.
  \]
  Then, $ (\rep{k, a}, W_{k, a}) $ equips a natural
  $ \mfrakC \times (\mathfrak{sl}(2, \setC), \widetilde{\mathit{SO}}(2)) $-module
  structure, where $ \mfrakC $ denotes the Coxeter group.
  Moreover, the $ \mfrakC \times (\mathfrak{sl}(2, \setC), \widetilde{\mathit{SO}}(2)) $-module
  $ (\rep{k, a}, W_{k, a}) $ lifts to a unique unitary representation
  $ \Rep{k, a} $ of $ \mfrakC \times \widetilde{\mathit{SL}}(2, \setR) $
  on the Hilbert space $ L^2(\setR, w_{k, a}(x) \,dx) $,
  which decomposes as
  \[
    L^2(\setR^N, w_{k, a}(x) \,dx)
    = \sumoplus_{m \in \setN} \mcalH_k^m(S^{N - 1})
      \otimes L^2(\setRp, r^{N - 3 + 2 \dunklindex{k} + a} \,dr).
  \]
  Here, $ \mfrakC $ acts on $ \mcalH_k^m(S^{N - 1}) $,
  and $ \widetilde{\mathit{SL}}(2, \setR) $ acts on
  $ L^2(\setRp, r^{N - 3 + 2 \dunklindex{k} + a} \,dr) $
  via the unitary representation $ \Rep{k, a}[m] $.
\end{theorem}

\begin{proof}[Proof of \cref{thm:radial-lifting,thm:total-lifting}]
  \cref{thm:total-lifting} is nothing but \cite[Theorems~3.28, 3.30, 3.31]{BKO2012},
  in the proof of which \cref{thm:radial-lifting} is essentially proven by using
  \cite[Fact~3.27, 7)]{BKO2012}.
\end{proof}

\subsection{\texorpdfstring{$ (k, a) $}{(k, a)}-generalized Laguerre semigroup}
\label{ssec:ls}

Let $ k $ be a non-negative multiplicity function, $ a > 0 $, and $ m \in \setN $
with $ \lambda_{k, a, m} = \frac{N - 2 + 2 \dunklindex{k} + 2m}{a} > -1 $.
For $ z \in \setC $, we define
\[
  \LS{k, a}[m](z)
  = \exp(-z \rep{k, a}[m](\mbfk))
  = \exp\Paren*{\frac{z}{a} (r^{-a} (\vartheta - m)(\vartheta + N - 2 + 2 \dunklindex{k} + m) - r^a)}.
\]
By \cref{thm:radial-spectrum}, each operator $ \LS{k, a}[m](z) $
on $ L^2(\setRp, r^{N - 3 + 2 \dunklindex{k} + a} \,dr) $
is diagonalized by the orthonormal basis $ \faml{f_{k, a, m; l}}{l \in \setN} $
and satisfies
\begin{equation}
  \LS{k, a}[m](z) f_{k, a, m; l}
  = e^{-z (\lambda_{k, a, m} + 2l + 1)} f_{k, a, m; l}.
  \label{eq:diagonalized-ls}
\end{equation}
In particular, $ \LS{k, a}[m](z) $ is a Hilbert--Schmidt operator
when $ \RePart z > 0 $, and a unitary operator when $ \RePart z = 0 $.

Now, let $ k $ be a non-negative multiplicity function and $ a > 0 $
with $ \lambda_{k, a, 0} = \frac{N - 2 + 2 \dunklindex{k}}{a} > -1 $.
For $ z \in \setC $, we define
\[
  \LS{k, a}(z)
  = \exp(-z \rep{k, a}(\mbfk))
  = \exp\Paren*{\frac{z}{a} (\enorm{x}^{2 - a} \Laplacian_k - \enorm{x}^a)},
\]
which decomposes as
\[
  \LS{k, a}(z)
  = \sumoplus_{m \in \setN} \id_{\mcalH_k^m(S^{N - 1})} \otimes \LS{k, a}[m](z).
\]
The operator $ \LS{k, a}(z) $ is a Hilbert--Schmidt operator
when $ \RePart z > 0 $, and a unitary operator when $ \RePart z = 0 $.
The operator semigroup $ \faml{\LS{k, a}(z)}{\RePart z \geq 0} $
is introduced and named the \emph{$ (k, a) $-generalized Laguerre semigroup}
by \cite{BKO2012}.

For more details on the $ (k, a) $-generalized Laguerre semigroup,
see \cite[Section~4]{BKO2012}.

\subsection{\texorpdfstring{$ (k, a) $}{(k, a)}-generalized Fourier transform}
\label{ssec:ft}

Let $ k $ be a non-negative multiplicity function, $ a > 0 $, and $ m \in \setN $
with $ \lambda_{k, a, m} = \frac{N - 2 + 2 \dunklindex{k} + 2m}{a} > -1 $.
We define a unitary operator $ \FT{k, a}[m] $
on $ L^2(\setRp, r^{N - 3 + 2 \dunklindex{k} + a} \,dr) $ by
\begin{align*}
  \FT{k, a}[m]
  &= e^{\frac{i\pi}{2} (\lambda_{k, a, 0} + 1)}
    \LS{k, a}[m]\Paren*{\frac{i\pi}{2}} \\
  &= e^{\frac{i\pi}{2} (\lambda_{k, a, 0} + 1)} 
    \exp\Paren*{\frac{i\pi}{2a} (r^{-a} (\vartheta - m)(\vartheta + N - 2 + 2 \dunklindex{k} + m) - r^a)}.
\end{align*}
As a special case of Equation~\cref{eq:diagonalized-ls},
$ \FT{k, a}[m] $ is diagonalized by the orthonormal basis
$ \faml{f_{k, a, m; l}}{l \in \setN} $ and satisfies
\[
  \FT{k, a}[m] f_{k, a, m; l}
  = e^{-i\pi(\frac{m}{a} + l)} f_{k, a, m; l}.
\]

Now, let $ k $ be a non-negative multiplicity function and $ a > 0 $
with $ \lambda_{k, a, 0} = \frac{N - 2 + 2 \dunklindex{k}}{a} > -1 $.
We define a unitary operator $ \FT{k, a} $ on $ L^2(\setR^N, w_{k, a}(x) \,dx) $ by
\[
  \FT{k, a}
  = e^{\frac{i\pi}{2} (\lambda_{k, a, 0} + 1)}
    \LS{k, a}\Paren*{\frac{i\pi}{2}}
  = e^{\frac{i\pi}{2} (\lambda_{k, a, 0} + 1)} 
    \exp\Paren*{\frac{i\pi}{2a} (\enorm{x}^{2 - a} \Laplacian_k - \enorm{x}^a)},
\]
which decomposes as
\[
  \FT{k, a}
  = \sumoplus_{m \in \setN} \id_{\mcalH_k^m(S^{N - 1})} \otimes \FT{k, a}[m].
\]
The unitary operator $ \FT{k, a} $ is introduced and named
the \emph{$ (k, a) $-generalized Fourier transform} by \cite{BKO2012}.

For more details on the $ (k, a) $-generalized Fourier transform,
see \cite[Section~5]{BKO2012}.

\section{\texorpdfstring{$ (k, a) $}{(k, a)}-generalized Fourier transform with negative \texorpdfstring{$ a $}{a}}
\label{sec:negative-a}

\subsection{A unitary transform that intertwines the cases \texorpdfstring{$ a > 0 $}{a > 0} and \texorpdfstring{$ a < 0 $}{a < 0}}

For $ \alpha \in \setR \setminus \setenum{0} $ and $ \beta \in \setR $,
we define a transform $ \kappa_{\alpha, \beta} $ of functions on
$ \setR^N \setminus \setenum{0} $ by
\[
  \kappa_{\alpha, \beta} F(x)
  = \enorm{x}^\beta F(\enorm{x}^{\alpha - 1} x)
  \quad (x \in \setR^N \setminus \setenum{0}).
\]
We also write, for a function $ f $ on $ \setRp $,
\[
  \kappa_{\alpha, \beta} f(r)
  = r^\beta f(r^\alpha)
  \quad (r \in \setRp).
\]

\begin{lemma}\label{thm:kappa}
  For $ \alpha \in \setR \setminus \setenum{0} $ and $ \beta \in \setR $,
  the transform $ \kappa_{\alpha, \beta} $ of functions on $ \setRp $ satisfies
  the following properties.
  \begin{enumarabicp}
    \item The transforms $ \kappa_{\alpha, \beta} $ yield an action of
          the one-dimensional affine group $ \mathrm{Aff}(1, \setR)
          = \set*{\begin{psmallmatrix} \alpha & 1 \\ 0 & \beta \end{psmallmatrix}}%
          {\text{$ \alpha \in \setR \setminus \setenum{0} $, $ \beta \in \setR $}} $,
          namely, $ \kappa_{\alpha, \beta} \circ \kappa_{\alpha', \beta'}
          = \kappa_{\alpha\alpha', \beta + \alpha\beta'} $.
    \item $ \vartheta \circ \kappa_{\alpha, \beta}
          = \kappa_{\alpha, \beta} \circ (\alpha\vartheta + \beta) $,
          where $ \vartheta = r \frac{d}{dr} $ is the Euler operator on $ \setRp $.
    \item For $ d \in \setC $, $ r^{\alpha d} \circ \kappa_{\alpha, \beta}
          = \kappa_{\alpha, \beta} \circ r^d $,
          where $ r^d $ and $ r^{\alpha d} $ denote the multiplication operators.
    \item For $ d \in \setR $, the transform $ \kappa_{\alpha, \beta} $ defines
          a unitary operator from $ L^2(\setRp, r^d \,dr) $ onto
          $ L^2(\setRp, \enorm{\alpha} r^{\alpha d + \alpha - 2\beta - 1} \,dr) $.
  \end{enumarabicp}
\end{lemma}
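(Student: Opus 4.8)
The plan is to verify all four properties by direct computation from the definition $\kappa_{\alpha, \beta} f(r) = r^\beta f(r^\alpha)$; properties (1)--(3) are purely formal, while (4) rests on a change of variables whose orientation must be tracked with care.

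First I would establish the composition law (1): expanding $(\kappa_{\alpha, \beta} \circ \kappa_{\alpha', \beta'}) f(r) = r^\beta (\kappa_{\alpha', \beta'} f)(r^\alpha)$ and inserting the definition of the inner transform at the point $r^\alpha$, the prefactors collect as $r^\beta (r^\alpha)^{\beta'} = r^{\beta + \alpha\beta'}$ and the argument becomes $(r^\alpha)^{\alpha'} = r^{\alpha\alpha'}$, which is exactly $\kappa_{\alpha\alpha', \beta + \alpha\beta'} f(r)$. Since the identity transform is $\kappa_{1, 0}$, this also shows that $\kappa_{\alpha, \beta}$ is invertible with inverse $\kappa_{1/\alpha, -\beta/\alpha}$, a fact I will reuse for surjectivity in (4). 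For (2) I would apply the product and chain rules to $r \frac{d}{dr}(r^\beta f(r^\alpha))$, obtaining $\beta r^\beta f(r^\alpha) + \alpha r^{\beta + \alpha} f'(r^\alpha)$, and verify that $\kappa_{\alpha, \beta}$ applied to $(\alpha\vartheta + \beta) f(s) = \alpha s f'(s) + \beta f(s)$ produces the same function. Property (3) is immediate, since both $r^{\alpha d} \circ \kappa_{\alpha, \beta}$ and $\kappa_{\alpha, \beta} \circ r^d$ send $f$ to $r^{\alpha d + \beta} f(r^\alpha)$.

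The substantive step is the unitarity in (4). I would verify norm preservation by direct substitution: inserting the definition into the target norm, the factor $r^{2\beta}$ coming from $\abs{\kappa_{\alpha, \beta} f(r)}^2$ absorbs the $-2\beta$ in the weight's exponent, leaving $\enorm{\alpha} \int_0^\infty \abs{f(r^\alpha)}^2 r^{\alpha d + \alpha - 1} \,dr$. With $s = r^\alpha$, so that $dr = \frac{1}{\alpha} s^{1/\alpha - 1} \,ds$ and $r^{\alpha d + \alpha - 1} = s^{d + 1 - 1/\alpha}$, the integrand collapses to $\frac{1}{\alpha} \abs{f(s)}^2 s^d \,ds$, and multiplying by the prefactor $\enorm{\alpha}$ recovers $\int_0^\infty \abs{f(s)}^2 s^d \,ds$, the norm on $L^2(\setRp, r^d \,dr)$.

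I expect the only real obstacle — and the reason the target weight carries $\enorm{\alpha}$ rather than $\alpha$ — to be the orientation of this substitution. For $\alpha > 0$ the map $r \mapsto r^\alpha$ preserves the orientation of $\setRp$ and $\enorm{\alpha}/\alpha = 1$, so the identity is immediate; for $\alpha < 0$ the substitution reverses the limits of integration, and that sign combines with $\enorm{\alpha}/\alpha = -1$ to give $+1$ once more. In either case $\kappa_{\alpha, \beta}$ is a norm-preserving linear map into the target space. Surjectivity then follows from (1): for $g$ in the target space, $f = \kappa_{1/\alpha, -\beta/\alpha} g$ satisfies $\kappa_{\alpha, \beta} f = g$, and the norm identity just proved shows that $f$ has finite norm in $L^2(\setRp, r^d \,dr)$; hence $\kappa_{\alpha, \beta}$ is unitary onto the stated space.
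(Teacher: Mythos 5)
Your proposal is correct and matches the paper's approach: the paper's entire proof is ``Confirmed by straightforward calculations,'' and your computations (including the orientation check for $\alpha<0$ in (4) and the use of the inverse $\kappa_{1/\alpha,-\beta/\alpha}$ for surjectivity) are exactly the calculations being alluded to.
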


\begin{proof}
  Confirmed by straightforward calculations.
\end{proof}

Recall from \cref{ssec:l2-theory} that we write $ (\mbfh, \mbfe^+, \mbfe^-) $
for the standard basis for $ \mathfrak{sl}(2, \setR) $.
We define an automorphism $ \tau $ of $ \mathfrak{sl}(2, \setR) $ by
\begin{equation}
  \tau(\mbfh) = \mbfh, \quad
  \tau(\mbfe^+) = -\mbfe^+, \quad
  \tau(\mbfe^-) = -\mbfe^-.
  \label{eq:tau}
\end{equation}
We again write $ \tau $ for its complexification.

Here are our main theorems, which state that the unitary transform
$ \kappa_{-1, -(N - 2 + 2 \dunklindex{k})} $ intertwines the cases $ a > 0 $ and
$ a < 0 $.

\begin{theorem}\label{thm:radial-intertwining}
  Let $ k $ be a (not necessarily non-negative) multiplicity function,
  $ a \in \setC \setminus \setenum{0} $, and $ m \in \setN $.
  The transform $ \kappa_{-1, -(N - 2 + 2 \dunklindex{k})} $ of functions on
  $ \setRp $ satisfies the following properties.
  \begin{enumarabicp}
    \item $ \kappa_{-1, -(N - 2 + 2 \dunklindex{k})}^2 = \id $.
    \item We have the following intertwining relations:
          \begin{align*}
            \kappa_{-1, -(N - 2 + 2 \dunklindex{k})} \circ \DiffH{k, a}[m]
            &= \DiffH{k, -a}[m] \circ \kappa_{-1, -(N - 2 + 2 \dunklindex{k})}, \\
            \kappa_{-1, -(N - 2 + 2 \dunklindex{k})} \circ \DiffEp{k, a}[m]
            &= -\DiffEp{k, -a}[m] \circ \kappa_{-1, -(N - 2 + 2 \dunklindex{k})}, \\
            \kappa_{-1, -(N - 2 + 2 \dunklindex{k})} \circ \DiffEm{k, a}[m]
            &= -\DiffEm{k, -a}[m] \circ \kappa_{-1, -(N - 2 + 2 \dunklindex{k})}.
          \end{align*}
          Thus, for $ X \in \mathfrak{sl}(2, \setC) $, we have
          \[
            \kappa_{-1, -(N - 2 + 2 \dunklindex{k})} \circ \rep{k, a}[m](X)
            = \rep{k, -a}[m](\tau(X)) \circ \kappa_{-1, -(N - 2 + 2 \dunklindex{k})}.
          \]
    \item If $ k $ is non-negative and $ a \in \setR \setminus \setenum{0} $,
          the transform $ \kappa_{-1, -(N - 2 + 2 \dunklindex{k})} $ defines
          a unitary operator from $ L^2(\setRp, r^{N - 3 + 2 \dunklindex{k} + a} \,dr) $
          onto $ L^2(\setRp, r^{N - 3 + 2 \dunklindex{k} - a} \,dr) $.
  \end{enumarabicp}
\end{theorem}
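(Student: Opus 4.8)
The plan is to reduce everything to the elementary commutation properties of $\kappa_{\alpha,\beta}$ collected in \cref{thm:kappa}, writing $\kappa = \kappa_{-1, -(N - 2 + 2 \dunklindex{k})}$ for brevity. Part (1) is immediate from the affine-action property (1) of \cref{thm:kappa}: composing $\kappa$ with itself gives $\kappa_{(-1)(-1),\, -(N-2+2\dunklindex{k}) + (-1)(-(N-2+2\dunklindex{k}))} = \kappa_{1,0}$, which is by definition the identity. Before turning to (2) I would record the two conjugation rules that do all the work: from property (2) together with $\kappa^2 = \id$ one gets $\kappa\circ\vartheta = (-\vartheta - (N-2+2\dunklindex{k}))\circ\kappa$, and from property (3) one gets $\kappa\circ r^c = r^{-c}\circ\kappa$ for every exponent $c$.

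For part (2) I would substitute these two rules into the explicit radial formulas. For $\DiffH{k, a}[m] = \frac{N-2+2\dunklindex{k}+a}{a} + \frac{2}{a}\vartheta$, moving $\kappa$ across $\vartheta$ turns $\frac{2}{a}\vartheta$ into $\frac{2}{a}(-\vartheta - (N-2+2\dunklindex{k}))$, and a short rearrangement of the constant term produces exactly $\DiffH{k, -a}[m]\circ\kappa$ (the sign of $a$ flips in both the scalar and the coefficient of $\vartheta$, with no overall sign). For $\DiffEp{k, a}[m] = \frac{i}{a} r^a$, the rule $\kappa\circ r^a = r^{-a}\circ\kappa$ immediately gives $\frac{i}{a} r^{-a}\circ\kappa = -\DiffEp{k, -a}[m]\circ\kappa$, the minus sign coming from $\frac{i}{-a} = -\frac{i}{a}$. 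These two cases are one-line substitutions.

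The third relation, for $\DiffEm{k, a}[m] = \frac{i}{a} r^{-a}(\vartheta-m)(\vartheta+N-2+2\dunklindex{k}+m)$, is the only step requiring care and is where I expect the main obstacle. After pushing $\kappa$ past $r^{-a}$ (producing $r^{a}$), one must conjugate the quadratic Euler factor. Using $\kappa\circ\vartheta = (-\vartheta-(N-2+2\dunklindex{k}))\circ\kappa$ one finds $\kappa\circ(\vartheta - m) = (-\vartheta - (N-2+2\dunklindex{k}) - m)\circ\kappa$ and $\kappa\circ(\vartheta + N-2+2\dunklindex{k} + m) = (-\vartheta + m)\circ\kappa$. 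The key point to verify is that the two transformed factors, up to the overall sign $(-1)^2 = 1$, recombine into the same product $(\vartheta-m)(\vartheta + N-2+2\dunklindex{k} + m)$; this works precisely because both factors are polynomials in $\vartheta$ and hence commute, so the two shifts can be freely reordered. Combined with the $r^{-a}\mapsto r^a$ flip and the scalar sign $\frac{i}{-a} = -\frac{i}{a}$, this yields $\kappa\circ\DiffEm{k, a}[m] = -\DiffEm{k, -a}[m]\circ\kappa$. The final assertion about $\tau$ is then formal: under the correspondence $\rep{k, a}[m](\mbfh) = \DiffH{k, a}[m]$, $\rep{k, a}[m](\mbfe^+) = \DiffEp{k, a}[m]$, $\rep{k, a}[m](\mbfe^-) = \DiffEm{k, a}[m]$ defining the representation from the $\mathfrak{sl}_2$-triple, the three displayed relations are exactly the cases $X = \mbfh, \mbfe^+, \mbfe^-$ of $\kappa\circ\rep{k, a}[m](X) = \rep{k, -a}[m](\tau(X))\circ\kappa$, and the general case follows by $\setC$-linearity since $\mbfh, \mbfe^+, \mbfe^-$ span $\mathfrak{sl}(2, \setC)$.

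Part (3) is a direct application of property (4) of \cref{thm:kappa} with $\alpha = -1$, $\beta = -(N-2+2\dunklindex{k})$, and $d = N-3+2\dunklindex{k}+a$. I would only need to check that the target weight exponent $\alpha d + \alpha - 2\beta - 1$ simplifies to $N-3+2\dunklindex{k}-a$ and that $\enorm{\alpha} = 1$, which identifies the target space as $L^2(\setRp, r^{N-3+2\dunklindex{k}-a}\,dr)$, as claimed.
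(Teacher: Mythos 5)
Your proposal is correct and follows essentially the same route as the paper: both reduce parts (1) and (3) to Lemma~\ref{thm:kappa}~(1) and (4), and prove part (2) by conjugating $\vartheta$ and the multiplication operators $r^d$ through $\kappa_{-1,-(N-2+2\dunklindex{k})}$ and substituting into the explicit radial formulas, with the same recombination of the two Euler factors in the $\DiffEm{k,a}[m]$ case. The only difference is that you spell out a few verifications (the exponent computation in (3), the commuting of the quadratic factors) that the paper leaves implicit.
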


\begin{proof}
  \begin{subproof}{(1)}
    It is a special case of \cref{thm:kappa}~(1).
  \end{subproof}

  \begin{subproof}{(2)}
    By \cref{thm:kappa}~(2) and (3),
    \begin{align*}
      \kappa_{-1, -(N - 2 + 2 \dunklindex{k})} \circ \vartheta
      &= (-\vartheta - (N - 2 + 2 \dunklindex{k})) \circ \kappa_{-1, -(N - 2 + 2 \dunklindex{k})}, \\
      \kappa_{-1, -(N - 2 + 2 \dunklindex{k})} \circ r^d
      &= r^{-d} \circ \kappa_{-1, -(N - 2 + 2 \dunklindex{k})}.
    \end{align*}
    Thus, we have
    \begingroup
      \allowdisplaybreaks
      \begin{align*}
        \kappa_{-1, -(N - 2 + 2 \dunklindex{k})} \circ \DiffH{k, a}[m]
        &= \kappa_{-1, -(N - 2 + 2 \dunklindex{k})} \circ \Paren*{\frac{N - 2 + 2 \dunklindex{k} + a}{a} + \frac{2}{a} \vartheta} \\
        &= \Paren*{\frac{-(N - 2 + 2 \dunklindex{k}) + a}{a} - \frac{2}{a} \vartheta} \circ \kappa_{-1, -(N - 2 + 2 \dunklindex{k})} \\
        &= \DiffH{k, -a}[m] \circ \kappa_{-1, -(N - 2 + 2 \dunklindex{k})}, \\
        \kappa_{-1, -(N - 2 + 2 \dunklindex{k})} \circ \DiffEp{k, a}[m]
        &= \kappa_{-1, -(N - 2 + 2 \dunklindex{k})} \circ \frac{i}{a} r^a \\
        &= \frac{i}{a} r^{-a} \circ \kappa_{-1, -(N - 2 + 2 \dunklindex{k})} \\
        &= -\DiffEp{k, -a}[m] \circ \kappa_{-1, -(N - 2 + 2 \dunklindex{k})}, \\
        \kappa_{-1, -(N - 2 + 2 \dunklindex{k})} \circ \DiffEm{k, a}[m]
        &= \kappa_{-1, -(N - 2 + 2 \dunklindex{k})} \circ \frac{i}{a} r^{-a} (\vartheta - m)(\vartheta + N - 2 + 2 \dunklindex{k} + m) \\
        &= \frac{i}{a} r^{a} (-\vartheta - (N - 2 + 2 \dunklindex{k} + m))(-\vartheta + m) \circ \kappa_{-1, -(N - 2 + 2 \dunklindex{k})} \\
        &= -\DiffEm{k, -a}[m] \circ \kappa_{-1, -(N - 2 + 2 \dunklindex{k})}.
      \end{align*}
    \endgroup
  \end{subproof}

  \begin{subproof}{(3)}
    It is a special case of \cref{thm:kappa}~(4).
  \end{subproof}
\end{proof}

\begin{theorem}\label{thm:total-intertwining}
  Let $ k $ be a (not necessarily non-negative) multiplicity function
  and $ a \in \setC \setminus \setenum{0} $.
  The transform $ \kappa_{-1, -(N - 2 + 2 \dunklindex{k})} $ of functions on
  $ \setR^N \setminus \setenum{0} $ satisfies the following properties.
  \begin{enumarabicp}
    \item $ \kappa_{-1, -(N - 2 + 2 \dunklindex{k})}^2 = \id $.
    \item The transform $ \kappa_{-1, -(N - 2 + 2 \dunklindex{k})} $ commutes
          the translations by the natural action of $ O(N) $
          (in particular, ones by the natural action of the Coxeter group $ \mfrakC $).
    \item The transform $ \kappa_{-1, -(N - 2 + 2 \dunklindex{k})} $ satisfies
          the following intertwining relations:
          \begin{align*}
            \kappa_{-1, -(N - 2 + 2 \dunklindex{k})} \circ \DiffH{k, a}
            &= \DiffH{k, a} \circ \kappa_{-1, -(N - 2 + 2 \dunklindex{k})}, \\
            \kappa_{-1, -(N - 2 + 2 \dunklindex{k})} \circ \DiffEp{k, a}
            &= -\DiffEp{k, -a} \circ \kappa_{-1, -(N - 2 + 2 \dunklindex{k})}, \\
            \kappa_{-1, -(N - 2 + 2 \dunklindex{k})} \circ \DiffEm{k, a}
            &= -\DiffEm{k, -a} \circ \kappa_{-1, -(N - 2 + 2 \dunklindex{k})}.
          \end{align*}
          Thus, for $ X \in \mathfrak{sl}(2, \setC) $, we have
          \[
            \kappa_{-1, -(N - 2 + 2 \dunklindex{k})} \circ \rep{k, a}(X)
            = \rep{k, -a}(\tau(X)) \circ \kappa_{-1, -(N - 2 + 2 \dunklindex{k})}.
          \]
    \item If $ k $ is non-negative and $ a \in \setR \setminus \setenum{0} $,
          the transform $ \kappa_{-1, -(N - 2 + 2 \dunklindex{k})} $ defines
          a unitary operator from $ L^2(\setR^N, w_{k, a}(x) \,dx) $
          onto $ L^2(\setRp, w_{k, a}(x) \,dx) $.
  \end{enumarabicp}
\end{theorem}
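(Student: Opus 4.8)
The plan is to deduce each assertion from its radial counterpart in \cref{thm:radial-intertwining} via the polar decomposition \cref{eq:decomposition}, so that essentially no fresh computation on $ \setR^N \setminus \setenum{0} $ is required. Throughout, write $ \kappa = \kappa_{-1, -(N - 2 + 2 \dunklindex{k})} $. The one identity that drives everything is that $ \kappa $ affects only the radial variable: since $ \enorm{x}^{-2} x $ points in the same direction as $ x $ and has norm $ \enorm{x}^{-1} $, a direct check gives, for $ p \in \mcalH_k^m(S^{N - 1}) $ and $ f \in C^\infty(\setRp) $,
\[
  \kappa (p \otimes f) = p \otimes (\kappa f),
\]
where on the right $ \kappa $ denotes the radial transform of \cref{thm:radial-intertwining}. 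Equivalently, under \cref{eq:decomposition} the operator $ \kappa $ is the identity on each angular factor $ \mcalH_k^m(S^{N - 1}) $ tensored with the radial $ \kappa $.

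Assertions (1) and (2) are then immediate. For (1) I would invoke the group law of \cref{thm:kappa}~(1) in its evident version for functions on $ \setR^N \setminus \setenum{0} $ (or simply compose the defining formula with itself), giving $ \kappa^2 = \id $. For (2), note that $ \kappa F(x) = \enorm{x}^{-(N - 2 + 2 \dunklindex{k})} F(\enorm{x}^{-2} x) $ is built only from $ \enorm{x} $ and the radial inversion $ x \mapsto \enorm{x}^{-2} x $; since $ \enorm{g^{-1} x} = \enorm{x} $ for $ g \in O(N) $, substituting $ g^{-1} x $ shows that $ \kappa $ commutes with the natural $ O(N) $-action, and the Coxeter case follows by restriction to $ \mfrakC \subset O(N) $.

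The heart is (3). By \cref{thm:radial}, each of $ \DiffH{k, a} $, $ \DiffEp{k, a} $, $ \DiffEm{k, a} $ sends $ p \otimes f $ to $ p \otimes (\cdot) $ of the corresponding radial operator $ \DiffH{k, a}[m] $, $ \DiffEp{k, a}[m] $, $ \DiffEm{k, a}[m] $. Feeding this together with the displayed identity $ \kappa(p \otimes f) = p \otimes (\kappa f) $ into the radial relations \cref{thm:radial-intertwining}~(2) shows that the three relations of (3) hold after evaluation on every $ p \otimes f $; these are precisely the instances $ X = \mbfh, \mbfe^+, \mbfe^- $ of the unified formula $ \kappa \circ \rep{k, a}(X) = \rep{k, -a}(\tau(X)) \circ \kappa $, with $ \tau $ as in \cref{eq:tau}. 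The remaining step — upgrading ``agreement on all $ p \otimes f $'' to an identity of operators on $ C^\infty(\setR^N \setminus \setenum{0}) $ — is where I expect the only real difficulty. I would settle it by observing that every operator in sight (multiplication operators, the Euler operator $ E $, and the Dunkl Laplacian $ \Laplacian_k $, whose difference part $ F \mapsto F(r_\alpha x) $ is continuous) is continuous for the $ C^\infty $-topology, and that finite sums $ \sum_i p_i \otimes f_i $ with $ p_i $ $ k $-spherical harmonics are $ C^\infty $-dense — the Dunkl analogue, via \cref{thm:decomposition}, of the $ C^\infty $-convergence of spherical-harmonic expansions.

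Finally, (4) follows from the tensor structure with no further computation: under \cref{eq:decomposition}, $ \kappa $ is the identity on $ L^2(S^{N - 1}, w_k(\omega) \,d\omega) $ tensored with the radial transform, which by \cref{thm:radial-intertwining}~(3) is unitary from $ L^2(\setRp, r^{N - 3 + 2 \dunklindex{k} + a} \,dr) $ onto $ L^2(\setRp, r^{N - 3 + 2 \dunklindex{k} - a} \,dr) $; tensoring with the identity yields the unitary isomorphism $ L^2(\setR^N, w_{k, a}(x) \,dx) \to L^2(\setR^N, w_{k, -a}(x) \,dx) $. The point of routing (3) through \cref{thm:radial} is exactly to avoid the alternative brute-force route, which would require a Kelvin-type inversion identity for $ \Laplacian_k $ under $ x \mapsto \enorm{x}^{-2} x $; the radial reduction packages that computation into \cref{thm:radial} once and for all.
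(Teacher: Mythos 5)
Your proposal is correct and follows essentially the same route as the paper: the paper likewise treats (2) as obvious and derives (1), (3), and (4) from \cref{thm:radial-intertwining} via \cref{thm:radial} and the decomposition \cref{eq:decomposition}. The only difference is that you spell out the identity $ \kappa (p \otimes f) = p \otimes (\kappa f) $ and the density/continuity step needed to upgrade (3) from agreement on sums of $ p \otimes f $ to an operator identity on $ C^\infty(\setR^N \setminus \setenum{0}) $, both of which the paper leaves implicit.
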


\begin{proof}
  The assertion (2) is obvious.
  Since $ \DiffH{k, a}[m] $, $ \DiffEp{k, a}[m] $, and $ \DiffEm{k, a}[m] $
  are the radial parts of $ \DiffH{k, a} $, $ \DiffEp{k, a} $, and $ \DiffEm{k, a} $
  respectively (\cref{thm:radial}) and we have the Hilbert sum decomposition
  \cref{eq:decomposition}, the assertions (1), (3), and (4) follow from
  \cref{thm:radial-intertwining}.
\end{proof}

\begin{remark}
  When $ k = 0 $, the transform in \cref{thm:total-intertwining} is
  \[
    \kappa_{-1, -(N - 2)} F(x)
    = \frac{1}{\enorm{x}^{N - 2}} F\Paren*{\frac{x}{\enorm{x}^2}},
  \]
  which is known as the Kelvin transform~\cite[p.\,39]{Hel2009}.
\end{remark}

\subsection{\texorpdfstring{$ L^2 $}{L2}-theory for the \texorpdfstring{$ \mathfrak{sl}_2 $}{sl2}-triple with negative \texorpdfstring{$ a $}{a}}

Recall from \cref{ssec:l2-theory} that we set
\[
  \lambda_{k, a, m}
  = \frac{N - 2 + 2 \dunklindex{k} + 2m}{a}.
\]
Substituting $ a $ by $ -a $, we get
\[
  \lambda_{k, -a, m}
  = \frac{N - 2 + 2 \dunklindex{k} + 2m}{-a}
  = -\lambda_{k, a, m}.
\]

\begin{proposition}
  Let $ k $ be a non-negative multiplicity function, $ a > 0 $, and $ m \in \setN $.
  For $ l \in \setN $, we define a function $ f_{k, -a, m; l} $ on $ \setRp $ by
  \begin{align*}
    & f_{k, -a, m; l}(r) \\
    &= \kappa_{-1, -(N - 2 + 2 \dunklindex{k})} f_{k, a, m; l}(r) \\
    &= \Paren*{\frac{2^{-\lambda_{k, -a, m} + 1} \Gamma(l + 1)}{a^{-\lambda_{k, -a, m}} \Gamma(-\lambda_{k, -a, m} + l + 1)}}^{1/2}
      r^{-(N - 2 + 2 \dunklindex{k} + m)}
      L^{(-\lambda_{k, -a, m})}_l \Paren*{\frac{2}{a} r^{-a}}
      \exp \Paren*{-\frac{1}{a} r^{-a}}
    \quad (r \in \setRp).
  \end{align*}
  If $ \lambda_{k, -a, m} < 1 $, $ \faml{f_{k, -a, m; l}}{l \in \setN} $ is an
  orthonormal basis for $ L^2(\setRp, r^{N - 3 + 2 \dunklindex{k} - a} \,dr) $.
\end{proposition}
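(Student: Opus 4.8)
The plan is to transport the positive-parameter result \cref{thm:onb} along the unitary intertwiner $ \kappa_{-1, -(N - 2 + 2\dunklindex{k})} $. The starting point is the bookkeeping identity $ \lambda_{k, -a, m} = -\lambda_{k, a, m} $ recorded immediately above the statement: it shows that the hypothesis $ \lambda_{k, -a, m} < 1 $ is \emph{equivalent} to $ \lambda_{k, a, m} > -1 $, which is precisely the condition under which \cref{thm:onb} produces an orthonormal basis. Thus the two parameter regimes are governed by the same inequality, and no range is lost or gained in passing from $ a $ to $ -a $.

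First I would verify the explicit formula by substitution. Since $ \kappa_{-1, -(N - 2 + 2\dunklindex{k})} f(r) = r^{-(N - 2 + 2\dunklindex{k})} f(r^{-1}) $, inserting the expression for $ f_{k, a, m; l} $ from \cref{thm:onb} replaces the factor $ r^m $ by $ r^{-(N - 2 + 2\dunklindex{k} + m)} $ and sends the arguments $ \frac{2}{a} r^a $ and $ -\frac{1}{a} r^a $ to $ \frac{2}{a} r^{-a} $ and $ -\frac{1}{a} r^{-a} $, while leaving the normalizing constant unchanged. Rewriting $ \lambda_{k, a, m} $ as $ -\lambda_{k, -a, m} $ in the constant and in the Laguerre index then reproduces the displayed formula for $ f_{k, -a, m; l} $ verbatim; this is a routine computation with no obstacle.

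For the orthonormal basis claim, I would argue that under $ \lambda_{k, -a, m} < 1 $, equivalently $ \lambda_{k, a, m} > -1 $, the family $ \faml{f_{k, a, m; l}}{l \in \setN} $ is an orthonormal basis for $ L^2(\setRp, r^{N - 3 + 2\dunklindex{k} + a} \,dr) $ by \cref{thm:onb}. Because $ k $ is non-negative and $ a > 0 $, \cref{thm:radial-intertwining}~(3) furnishes a unitary operator $ \kappa_{-1, -(N - 2 + 2\dunklindex{k})} $ from this Hilbert space onto $ L^2(\setRp, r^{N - 3 + 2\dunklindex{k} - a} \,dr) $. A unitary operator carries an orthonormal basis to an orthonormal basis, so the image family $ \faml{f_{k, -a, m; l}}{l \in \setN} $ is an orthonormal basis of the target space, which is exactly the assertion. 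The proposition is therefore a direct consequence of \cref{thm:onb} and \cref{thm:radial-intertwining}; I expect no genuine difficulty, the only point demanding attention being the verification that the single hypothesis $ \lambda_{k, -a, m} < 1 $ matches the inequality $ \lambda_{k, a, m} > -1 $ required by \cref{thm:onb}.
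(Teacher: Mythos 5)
Your proposal is correct and follows exactly the paper's own argument: the paper likewise deduces the result from \cref{thm:onb} together with the unitarity of $\kappa_{-1, -(N - 2 + 2\dunklindex{k})}$ given by \cref{thm:radial-intertwining}~(3). The only difference is that you spell out the routine substitution verifying the explicit formula and the equivalence $\lambda_{k,-a,m} < 1 \Leftrightarrow \lambda_{k,a,m} > -1$, which the paper leaves implicit.
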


\begin{proof}
  It follows from the fact that $ \faml{f_{k, a, m; l}}{l \in \setN} $
  is an orthonormal basis for $ L^2(\setRp, r^{N - 3 + 2 \dunklindex{k} + a} \,dr) $
  (\cref{thm:onb}) and that
  $ \map{\kappa_{-1, -(N - 2 + 2 \dunklindex{k})}}{L^2(\setRp, r^{N - 3 + 2 \dunklindex{k} + a} \,dr)}%
  {L^2(\setRp, r^{N - 3 + 2 \dunklindex{k} - a} \,dr)} $ is a unitary operator
  (\cref{thm:radial-intertwining}~(3)).
\end{proof}

Recall from \cref{ssec:l2-theory} that we write $ (\mbfn, \mbfk^+, \mbfk^-) $
for the Cayley transform of the standard basis $ (\mbfh, \mbfe^+, \mbfe^-) $
for $ \mathfrak{sl}(2, \setR) $. Note that the automorphism $ \tau $ defined by
Equation~\cref{eq:tau} satisfies
\[
  \tau(\mbfk) = -\mbfk, \quad
  \tau(\mbfn^+) = -\mbfn^-, \quad
  \tau(\mbfn^-) = -\mbfn^+.
\]

\begin{proposition}
  Let $ k $ be a non-negative multiplicity function, $ a > 0 $, and $ m \in \setN $
  with $ \lambda_{k, -a, m} = \frac{N - 2 + 2 \dunklindex{k} + 2m}{-a} < 1 $.
  For $ l \in \setN $, we have
  \begin{align*}
    \rep{k, -a}[m](\mbfk)   f_{k, -a, m; l} &= (\lambda_{k, -a, m} - 2l - 1) f_{k, -a, m; l}, \\
    \rep{k, -a}[m](\mbfn^+) f_{k, -a, m; l} &= -i \sqrt{l (-\lambda_{k, -a, m} + l)} \, f_{k, -a, m; l - 1}, \\
    \rep{k, -a}[m](\mbfn^-) f_{k, -a, m; l} &= -i \sqrt{(l + 1)(-\lambda_{k, -a, m} + l + 1)} \, f_{k, -a, m; l + 1},
  \end{align*}
  where we regard $ f_{k, -a, m; -1} = 0 $.
\end{proposition}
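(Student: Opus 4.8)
The plan is to transport the known action for the case $ a > 0 $ (\cref{thm:action}) through the intertwining operator $ \kappa := \kappa_{-1, -(N - 2 + 2 \dunklindex{k})} $, exploiting the defining relation $ f_{k, -a, m; l} = \kappa f_{k, a, m; l} $. The automorphism $ \tau $ of \cref{eq:tau} is an involution, since it fixes $ \mbfh $ and negates $ \mbfe^\pm $, so the intertwining relation of \cref{thm:radial-intertwining}~(2), namely $ \kappa \circ \rep{k, a}[m](X) = \rep{k, -a}[m](\tau(X)) \circ \kappa $, can be rewritten, after substituting $ X = \tau(Y) $, as
\[
  \rep{k, -a}[m](Y) \circ \kappa
  = \kappa \circ \rep{k, a}[m](\tau(Y))
  \qquad (Y \in \mathfrak{sl}(2, \setC)).
\]
This reformulation is the crux of the argument.

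Next, for each generator $ Y \in \setenum{\mbfk, \mbfn^+, \mbfn^-} $ I would compute
\[
  \rep{k, -a}[m](Y) f_{k, -a, m; l}
  = \rep{k, -a}[m](Y) \kappa f_{k, a, m; l}
  = \kappa \, \rep{k, a}[m](\tau(Y)) f_{k, a, m; l},
\]
inserting the values $ \tau(\mbfk) = -\mbfk $, $ \tau(\mbfn^+) = -\mbfn^- $, and $ \tau(\mbfn^-) = -\mbfn^+ $ recorded just above the statement. Since $ \kappa $ is $ \setC $-linear, the scalar eigenvalue and ladder coefficients furnished by \cref{thm:action} pass through $ \kappa $ untouched, while $ \kappa $ carries $ f_{k, a, m; l \pm 1} $ to $ f_{k, -a, m; l \pm 1} $ by definition. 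For instance, $ \rep{k, -a}[m](\mbfn^+) f_{k, -a, m; l} = -\kappa \, \rep{k, a}[m](\mbfn^-) f_{k, a, m; l} = -i \sqrt{l (\lambda_{k, a, m} + l)} \, f_{k, -a, m; l - 1} $, and the other two generators are handled the same way.

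The final step is purely cosmetic: I would rewrite each occurrence of $ \lambda_{k, a, m} $ via the identity $ \lambda_{k, -a, m} = -\lambda_{k, a, m} $ established above, which turns $ -(\lambda_{k, a, m} + 2l + 1) $ into $ \lambda_{k, -a, m} - 2l - 1 $ and $ \lambda_{k, a, m} + l $ into $ -\lambda_{k, -a, m} + l $, yielding exactly the asserted formulas. No genuine obstacle arises here; the only point requiring care is tracking the three minus signs (one per $ \tau $-value) together with the index shift, which is what swaps the raising and lowering roles of $ \mbfn^+ $ and $ \mbfn^- $ relative to the $ a > 0 $ case.
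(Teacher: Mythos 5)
Your proposal is correct and follows essentially the same route as the paper, which simply applies $\kappa_{-1, -(N - 2 + 2\dunklindex{k})}$ to both sides of the equations in the $a>0$ case and invokes the intertwining relation of the preceding theorem; your rewriting of that relation via $\tau^2 = \id$ and the bookkeeping of signs and of $\lambda_{k,-a,m} = -\lambda_{k,a,m}$ are all accurate.
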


\begin{proof}
  Apply $ \kappa_{-1, -(N - 2 + 2 \dunklindex{k})} $ to the both sides
  of the equations in \cref{thm:action},
  and then use \cref{thm:radial-intertwining}~(2).
\end{proof}

In the same way as \cref{thm:radial-spectrum,thm:total-spectrum},
we obtain the following spectral properties.

\begin{corollary}\label{thm:radial-spectrum-with-negative-a}
  Let $ k $ be a non-negative multiplicity function, $ a > 0 $, and $ m \in \setN $
  with $ \lambda_{k, -a, m} = \frac{N - 2 + 2 \dunklindex{k} + 2m}{-a} < 1 $.
  The differential-difference operator
  \[
    \rep{k, -a}[m](\mbfk)
    = -\frac{1}{a} (r^{-a} - r^a (\vartheta - m) (\vartheta + N - 2 + 2 \dunklindex{k} + m))
  \]
  restricted to $ W_{k, -a}^{(m)} = \lspan_{\setC} \set{f_{k, -a, m; l}}{l \in \setN} $
  is essentially self-adjoint on $ L^2(\setRp, r^{N - 3 + 2 \dunklindex{k} - a} \,dr) $
  and diagonalized by the orthonormal basis $ \faml{f_{k, -a, m; l}}{l \in \setN} $.
  The discrete spectrum corresponding to $ f_{k, -a, m; l} $ is
  $ \lambda_{k, -a, m} - 2l - 1 $.
  \qed
\end{corollary}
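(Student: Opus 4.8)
The plan is to transport \cref{thm:radial-spectrum} through the unitary intertwiner furnished by \cref{thm:radial-intertwining}, exactly as the corollary's preamble suggests. Write $ \kappa = \kappa_{-1, -(N - 2 + 2 \dunklindex{k})} $ for brevity. By \cref{thm:radial-intertwining}~(3), $ \kappa $ is a unitary operator from $ L^2(\setRp, r^{N - 3 + 2 \dunklindex{k} + a} \,dr) $ onto $ L^2(\setRp, r^{N - 3 + 2 \dunklindex{k} - a} \,dr) $, and by part~(1) it is an involution, so $ \kappa^{-1} = \kappa $.

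First I would record the underlying operator identity. Applying the intertwining relation of \cref{thm:radial-intertwining}~(2) to $ X = \mbfk $ and using the fact recorded above that $ \tau(\mbfk) = -\mbfk $, one obtains $ \kappa \circ \rep{k, a}[m](\mbfk) = -\rep{k, -a}[m](\mbfk) \circ \kappa $, and hence $ \rep{k, -a}[m](\mbfk) = -\kappa \circ \rep{k, a}[m](\mbfk) \circ \kappa $. In words, $ \rep{k, -a}[m](\mbfk) $ is, up to the scalar sign coming from $ \tau $, the unitary conjugate by $ \kappa $ of the operator treated in \cref{thm:radial-spectrum}.

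Next I would match the hypotheses and read off the spectrum. Since $ \lambda_{k, -a, m} = -\lambda_{k, a, m} $, the assumption $ \lambda_{k, -a, m} < 1 $ is precisely the condition $ \lambda_{k, a, m} > -1 $ required by \cref{thm:radial-spectrum}. Because $ \kappa $ carries the orthonormal basis $ \faml{f_{k, a, m; l}}{l \in \setN} $ to $ \faml{f_{k, -a, m; l}}{l \in \setN} $ (by definition $ f_{k, -a, m; l} = \kappa f_{k, a, m; l} $, and $ \kappa^2 = \id $), it maps $ W_{k, a}^{(m)} $ onto $ W_{k, -a}^{(m)} $. Diagonalization then follows from $ \rep{k, -a}[m](\mbfk) f_{k, -a, m; l} = -\kappa \rep{k, a}[m](\mbfk) f_{k, a, m; l} = -(\lambda_{k, a, m} + 2l + 1) f_{k, -a, m; l} = (\lambda_{k, -a, m} - 2l - 1) f_{k, -a, m; l} $, which also recovers the claimed eigenvalues and is indeed already the content of the action proposition immediately preceding the corollary.

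Finally, essential self-adjointness is inherited: a densely defined symmetric operator is essentially self-adjoint if and only if its unitary conjugate is, and multiplication by the scalar $ -1 $ does not affect this property. Since $ \rep{k, a}[m](\mbfk) $ restricted to $ W_{k, a}^{(m)} $ is essentially self-adjoint on $ L^2(\setRp, r^{N - 3 + 2 \dunklindex{k} + a} \,dr) $ by \cref{thm:radial-spectrum}, so is $ \rep{k, -a}[m](\mbfk) $ restricted to $ W_{k, -a}^{(m)} $. The only point demanding care is the bookkeeping of the sign from $ \tau(\mbfk) = -\mbfk $ together with the identity $ \lambda_{k, a, m} = -\lambda_{k, -a, m} $; I do not expect a genuine analytic obstacle, since the whole statement is a unitary transport of the already-established $ a > 0 $ case.
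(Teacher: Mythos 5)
Your proposal is correct and follows essentially the same route as the paper: the paper obtains this corollary ``in the same way as'' \cref{thm:radial-spectrum}, i.e.\ from the two propositions immediately preceding it (the orthonormal basis $ \faml{f_{k, -a, m; l}}{l \in \setN} $ and the action of $ \mbfk $), both of which are themselves produced by transporting the $ a > 0 $ statements through $ \kappa_{-1, -(N - 2 + 2 \dunklindex{k})} $ exactly as you do. Your sign bookkeeping via $ \tau(\mbfk) = -\mbfk $ and $ \lambda_{k, -a, m} = -\lambda_{k, a, m} $, and the observation that essential self-adjointness is preserved under unitary conjugation and real scalar multiplication, match the intended argument.
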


\begin{corollary}
  Let $ k $ be a non-negative multiplicity function and $ a > 0 $
  with $ \lambda_{k, -a, 0} = \frac{N - 2 + 2 \dunklindex{k}}{-a} < 1 $.
  The differential-difference operator
  \[
    \rep{k, -a}(\mbfk)
    = \frac{1}{-a} (\enorm{x}^{-a} - \enorm{x}^{2 + a} \Laplacian_k)
  \]
  restricted to $ W_{k, -a} = \bigoplus_{m \in \setN} \mcalH_k^m(S^{N - 1}) \otimes W_{k, -a}^{(m)} $
  is essentially self-adjoint on $ L^2(\setR^N, w_{k, -a}(x) \,dx) $,
  has no continuous spectra, and has the set of discrete spectra
  \[
    \begin{cases}
      \set{\lambda_{k, -a, m} - 2l - 1}{\text{$ m $, $ l \in \setN $}} & (N \geq 2) \\
      \set{\frac{2 \dunklindex{k} \pm 1}{-a} - 2l - 1}{l \in \setN}    & (N = 1).
    \end{cases}
    \pushQED{\qed}\qedhere\popQED
  \]
\end{corollary}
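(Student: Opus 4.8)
The plan is to mirror the proof of \cref{thm:total-spectrum} verbatim, substituting $ -a $ for $ a $ throughout, since the radial ingredient \cref{thm:radial-spectrum-with-negative-a} is already in hand.

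First I would record the polar decomposition of the weight: replacing $ a $ by $ -a $ in \cref{eq:weight} gives $ w_{k, -a}(x) \,dx = w_k(\omega) \,d\omega \otimes r^{N - 3 + 2 \dunklindex{k} - a} \,dr $, so that \cref{thm:decomposition} yields the Hilbert sum decomposition
\[
  L^2(\setR^N, w_{k, -a}(x) \,dx)
  = \sumoplus_{m \in \setN} \mcalH_k^m(S^{N - 1}) \otimes L^2(\setRp, r^{N - 3 + 2 \dunklindex{k} - a} \,dr).
\]
Next, by \cref{thm:radial} applied with the parameter $ -a $, the operator $ \rep{k, -a}(\mbfk) $ preserves each summand $ \mcalH_k^m(S^{N - 1}) \otimes W_{k, -a}^{(m)} $ and acts on it as $ \id \otimes \rep{k, -a}[m](\mbfk) $. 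By \cref{thm:radial-spectrum-with-negative-a}, the radial factor $ \rep{k, -a}[m](\mbfk) $ is essentially self-adjoint on $ L^2(\setRp, r^{N - 3 + 2 \dunklindex{k} - a} \,dr) $ with purely discrete spectrum $ \set{\lambda_{k, -a, m} - 2l - 1}{l \in \setN} $, diagonalized by the orthonormal eigenbasis $ \faml{f_{k, -a, m; l}}{l \in \setN} $.

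The remaining step is to assemble the pieces. Since $ W_{k, -a} $ is the algebraic direct sum of these invariant subspaces and the vectors $ p \otimes f_{k, -a, m; l} $, with $ p $ ranging over an orthonormal basis of each $ \mcalH_k^m(S^{N - 1}) $ and $ m, l \in \setN $, together form an orthonormal basis of eigenvectors for the whole space, the restriction of $ \rep{k, -a}(\mbfk) $ to $ W_{k, -a} $ is essentially self-adjoint, its closure has pure point spectrum (hence no continuous spectrum), and the spectrum is the union over $ m $ of the radial spectra. For $ N \geq 2 $ this gives $ \set{\lambda_{k, -a, m} - 2l - 1}{\text{$ m $, $ l \in \setN $}} $; for $ N = 1 $ one uses that $ \mcalH_k^m(S^0) \neq \zeromod $ only for $ m = 0 $, $ 1 $, so that $ N - 2 + 2 \dunklindex{k} + 2m = 2 \dunklindex{k} \pm 1 $ and the spectrum collapses to $ \set{\frac{2 \dunklindex{k} \pm 1}{-a} - 2l - 1}{l \in \setN} $.

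The main obstacle is the essential self-adjointness of the infinite orthogonal direct sum: one must verify that the algebraic span $ W_{k, -a} $ is a common core and that no continuous spectrum appears. This rests on the standard fact that a symmetric operator admitting an orthonormal basis of eigenvectors is essentially self-adjoint on their linear span with purely discrete spectrum; the bookkeeping across summands is identical to that in \cref{thm:total-spectrum}, so rather than redo it I would simply invoke the parallel, exactly as the corollary's lead-in sentence (``In the same way as \ldots'') anticipates.
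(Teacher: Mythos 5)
Your proposal is correct and follows exactly the route the paper intends: the corollary is stated with only a ``\(\qed\)'' because it is obtained ``in the same way as'' \cref{thm:radial-spectrum,thm:total-spectrum}, i.e.\ by combining the radial result \cref{thm:radial-spectrum-with-negative-a} with the Hilbert sum decomposition of \( L^2(\setR^N, w_{k,-a}(x)\,dx) \) and \cref{thm:radial}, including the observation that \( \mcalH_k^m(S^0) \neq \zeromod \) only for \( m = 0, 1 \) when \( N = 1 \). Your write-up simply makes explicit the assembly step that the paper leaves implicit.
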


\subsection{Lifting to a unitary representation \texorpdfstring{$ \Rep{k, a} $}{Ωk, a} with negative \texorpdfstring{$ a $}{a}}

Recall from \cref{ssec:lifting} that we write $ \widetilde{\mathit{SL}}(2, \setR) $
for the universal covering Lie group of $ \mathit{SL}(2, \setR) $,
and $ \widetilde{\mathit{SO}}(2) $ for the connected Lie subgroup of
$ \widetilde{\mathit{SL}}(2, \setR) $ with Lie algebra
$ \mathfrak{so}(2) = \setR i\mbfk $.

In \cref{ssec:lifting}, we use a lowest weight
$ (\mathfrak{sl}(2, \setC), \widetilde{\mathit{SO}}(2)) $-module
$ \pi_{\widetilde{\mathit{SO}}(2)}(\lambda) $ of weight $ \lambda + 1 $,
and its lifting $ \pi(\lambda) $. Correspondingly, in this subsection,
we use a highest weight $ (\mathfrak{sl}(2, \setC), \widetilde{\mathit{SO}}(2)) $-module
and its lifting.

Let $ (\varpi, V) $ be a $ (\mathfrak{sl}(2, \setC), \widetilde{\mathit{SO}}(2)) $-module.
We say that $ v \in V \setminus \setenum{0} $ is a \emph{highest weight vector
of weight $ \mu \in \setC $} if
\[
  \varpi(\mbfk) v = \mu v
  \quad \text{and} \quad
  \varpi(\mbfn^+) v = 0.
\]
We say that $ (\varpi, V) $ is a \emph{highest weight module of weight $ \mu $}
if $ V $ is generated by such $ v $.
As with the case of lowest weight modules, for each $ \lambda \in \setC $,
there exists a unique irreducible highest weight module of weight $ \lambda - 1 $
(up to isomorphism), for which we write $ \pi'_{\widetilde{\mathit{SO}}(2)}(\lambda) $.
Moreover, for $ \lambda \leq 1 $, $ \pi'_{\widetilde{\mathit{SO}}(2)}(\lambda) $
lifts to a unique irreducible unitary representation $ \pi'(\lambda) $
of $ \widetilde{\mathit{SL}}(2, \setR) $,
which is infinite-dimensional except for the one-dimensional trivial representation
$ \pi'(1) $.

\begin{remark}\label{rem:lowest-and-highest-weight-module}
  With the automorphism $ \tau $ of $ \mathfrak{sl}(2, \setR) $ defined by
  \cref{eq:tau} and its lifting $ \widetilde{\tau} $ to an automorphism
  of $ \widetilde{\mathit{SL}}(2, \setR) $, we have
  \begin{align*}
    \pi_{\widetilde{\mathit{SO}}(2)}(\lambda) \circ \tau
    &= \pi'_{\widetilde{\mathit{SO}}(2)}(-\lambda), \\
    \pi(\lambda) \circ \widetilde{\tau}
    &= \pi'(-\lambda).
  \end{align*}
\end{remark}

\begin{theorem}
  Let $ k $ be a non-negative multiplicity function, $ a > 0 $, and $ m \in \setN $
  with $ \lambda_{k, -a, m} = \frac{N - 2 + 2 \dunklindex{k} + 2m}{-a} < 1 $.
  We set
  \[
    W_{k, -a}^{(m)}
    = \lspan_{\setC} \set{f_{k, -a, m; l}}{l \in \setN}.
  \]
  Then, $ (\rep{k, -a}[m], W_{k, -a}^{(m)}) $ equips a natural
  $ (\mathfrak{sl}(2, \setC), \widetilde{\mathit{SO}}(2)) $-module structure,
  and is isomorphic to $ \pi'_{\widetilde{\mathit{SO}}(2)}(\lambda_{k, -a, m}) $.
  Moreover, the $ (\mathfrak{sl}(2, \setC), \widetilde{\mathit{SO}}(2)) $-module
  $ (\rep{k, -a}[m], W_{k, -a}^{(m)}) $ lifts to a unique unitary representation
  $ \Rep{k, -a}[m] $ of $ \widetilde{\mathit{SL}}(2, \setR) $
  on the Hilbert space $ L^2(\setRp, r^{N - 3 + 2 \dunklindex{k} - a} \,dr) $,
  which is unitarily equivalent to $ \pi'(\lambda_{k, -a, m}) $.
\end{theorem}

\begin{proof}
  It follows from the result for positive $ a $ (\cref{thm:radial-lifting}),
  the intertwining property of $ \kappa_{-1, -(N - 2 + 2 \dunklindex{k})} $
  (\cref{thm:radial-intertwining}),
  and \cref{rem:lowest-and-highest-weight-module}.
\end{proof}

\begin{theorem}
  Let $ k $ be a non-negative multiplicity function and $ a > 0 $
  with $ \lambda_{k, -a, 0} = \frac{N - 2 + 2 \dunklindex{k}}{-a} < 1 $.
  We set
  \[
    W_{k, -a}
    = \bigoplus_{m \in \setN} \mcalH_k^m(S^{N - 1}) \otimes W_{k, -a}^{(m)}.
  \]
  Then, $ (\rep{k, -a}, W_{k, -a}) $ equips a natural
  $ \mfrakC \times (\mathfrak{sl}(2, \setC), \widetilde{\mathit{SO}}(2)) $-module
  structure, where $ \mfrakC $ denotes the Coxeter group.
  Moreover, the $ \mfrakC \times (\mathfrak{sl}(2, \setC), \widetilde{\mathit{SO}}(2)) $-module
  $ (\rep{k, -a}, W_{k, -a}) $ lifts to a unique unitary representation
  $ \Rep{k, -a} $ of $ \mfrakC \times \widetilde{\mathit{SL}}(2, \setR) $
  on the Hilbert space $ L^2(\setR \setminus \setenum{0}, w_{k, -a}(x) \,dx) $,
  which decomposes as
  \[
    L^2(\setR^N, w_{k, -a}(x) \,dx)
    = \sumoplus_{m \in \setN} \mcalH_k^m(S^{N - 1})
      \otimes L^2(\setRp, r^{N - 3 + 2 \dunklindex{k} - a} \,dr).
  \]
  Here, $ \mfrakC $ acts on $ \mcalH_k^m(S^{N - 1}) $,
  and $ \widetilde{\mathit{SL}}(2, \setR) $ acts on
  $ L^2(\setRp, r^{N - 3 + 2 \dunklindex{k} - a} \,dr) $
  via the unitary representation $ \Rep{k, -a}[m] $.
\end{theorem}

\begin{proof}
  It follows from the result for positive $ a $ (\cref{thm:total-lifting})
  and the intertwining property of $ \kappa_{-1, -(N - 2 + 2 \dunklindex{k})} $
  (\cref{thm:total-intertwining}).
\end{proof}

\subsection{\texorpdfstring{$ (k, a) $}{(k, a)}-generalized Laguerre semigroup with negative \texorpdfstring{$ a $}{a}}
\label{ssec:ls-with-negative-a}

In this subsection, we extend the definition of the $ (k, a) $-generalized
Laguerre semigroup to the case $ a < 0 $.

Let $ k $ be a non-negative multiplicity function, $ a > 0 $, and $ m \in \setN $
with $ \lambda_{k, -a, m} = \frac{N - 2 + 2 \dunklindex{k} + 2m}{-a} < 1 $.
For $ z \in \setC $, we define
\[
  \LS{k, -a}[m](z)
  = \exp(-z \rep{k, -a}[m](\mbfk))
  = \exp\Paren*{\frac{z}{-a} (r^a (\vartheta - m)(\vartheta + N - 2 + 2 \dunklindex{k} + m) - r^{-a})}.
\]
By \cref{thm:radial-spectrum-with-negative-a}, each operator $ \LS{k, -a}[m](z) $
on $ L^2(\setRp, r^{N - 3 + 2 \dunklindex{k} - a} \,dr) $
is diagonalized by the orthonormal basis $ \faml{f_{k, -a, m; l}}{l \in \setN} $
and satisfies
\begin{equation}
  \LS{k, -a}[m](z) f_{k, -a, m; l}
  = e^{-z (\lambda_{k, -a, m} - 2l - 1)} f_{k, -a, m; l}.
  \label{eq:diagonalized-ls-with-negative-a}
\end{equation}
In particular, $ \LS{k, -a}[m](z) $ is a Hilbert--Schmidt operator
when $ \RePart z < 0 $, and a unitary operator when $ \RePart z = 0 $.

Now, let $ k $ be a non-negative multiplicity function and $ a > 0 $
with $ \lambda_{k, -a, 0} = \frac{N - 2 + 2 \dunklindex{k}}{-a} < 1 $.
For $ z \in \setC $, we define
\[
  \LS{k, -a}(z)
  = \exp(-z \rep{k, -a}(\mbfk))
  = \exp\Paren*{\frac{z}{-a} (\enorm{x}^{2 + a} \Laplacian_k - \enorm{x}^{-a})},
\]
which decomposes as
\[
  \LS{k, -a}(z)
  = \sumoplus_{m \in \setN} \id_{\mcalH_k^m(S^{N - 1})} \otimes \LS{k, -a}[m](z).
\]
The operator $ \LS{k, -a}(z) $ is a Hilbert--Schmidt operator
when $ \RePart z < 0 $, and a unitary operator when $ \RePart z = 0 $.
We call $ \faml{\LS{k, -a}(z)}{\RePart z \leq 0} $
the \emph{$ (k, -a) $-generalized Laguerre semigroup}.

\begin{remark}\label{rem:ls-and-kappa}
  By the intertwining property of $ \kappa_{-1, -(N - 2 + 2 \dunklindex{k})} $
  (\cref{thm:radial-intertwining,thm:total-intertwining}), we have
  \begin{align*}
    \kappa_{-1, -(N - 2 + 2 \dunklindex{k})} \circ \LS{k, a}[m](z)
    &= \LS{k, -a}[m](-z) \circ \kappa_{-1, -(N - 2 + 2 \dunklindex{k})}, \\
    \kappa_{-1, -(N - 2 + 2 \dunklindex{k})} \circ \LS{k, a}(z)
    &= \LS{k, -a}(-z) \circ \kappa_{-1, -(N - 2 + 2 \dunklindex{k})}.
  \end{align*}
\end{remark}

\subsection{\texorpdfstring{$ (k, a) $}{(k, a)}-generalized Fourier transform with negative \texorpdfstring{$ a $}{a}}
\label{ssec:ft-with-negative-a}

In this subsection, we extend the definition of the $ (k, a) $-generalized
Fourier transform to the case $ a < 0 $.

Let $ k $ be a non-negative multiplicity function, $ a > 0 $, and $ m \in \setN $
with $ \lambda_{k, -a, m} = \frac{N - 2 + 2 \dunklindex{k} + 2m}{-a} < 1 $.
We define a unitary operator $ \FT{k, -a}[m] $
on $ L^2(\setRp, r^{N - 3 + 2 \dunklindex{k} - a} \,dr) $ by
\begin{align*}
  \FT{k, -a}[m]
  &= e^{\frac{i\pi}{2} (\lambda_{k, -a, 0} + 1)}
    \LS{k, -a}[m]\Paren*{\frac{i\pi}{2}} \\
  &= e^{\frac{i\pi}{2} (\lambda_{k, -a, 0} + 1)} 
    \exp\Paren*{\frac{i\pi}{-2a} (r^a (\vartheta - m)(\vartheta + N - 2 + 2 \dunklindex{k} + m) - r^{-a})}.
\end{align*}
As a special case of Equation~\cref{eq:diagonalized-ls-with-negative-a},
$ \FT{k, -a}[m] $ is diagonalized by the orthonormal basis
$ \faml{f_{k, -a, m; l}}{l \in \setN} $ and satisfies
\[
  \FT{k, -a}[m] f_{k, -a, m; l}
  = -e^{i\pi(\frac{m}{a} + l)} f_{k, -a, m; l}.
\]

Now, let $ k $ be a non-negative multiplicity function and $ a > 0 $
with $ \lambda_{k, -a, 0} = \frac{N - 2 + 2 \dunklindex{k}}{-a} < 1 $.
We define a unitary operator $ \FT{k, -a} $ on $ L^2(\setR^N, w_{k, -a}(x) \,dx) $ by
\[
  \FT{k, -a}
  = e^{\frac{i\pi}{2} (\lambda_{k, -a, 0} + 1)}
    \LS{k, -a}\Paren*{\frac{i\pi}{2}}
  = e^{\frac{i\pi}{2} (\lambda_{k, -a, 0} + 1)} 
    \exp\Paren*{\frac{i\pi}{-2a} (\enorm{x}^{2 + a} \Laplacian_k - \enorm{x}^{-a})},
\]
which decomposes as
\[
  \FT{k, -a}
  = \sumoplus_{m \in \setN} \id_{\mcalH_k^m(S^{N - 1})} \otimes \FT{k, -a}[m].
\]
We call the unitary operator $ \FT{k, -a} $
the \emph{$ (k, -a) $-generalized Fourier transform}.

\begin{remark}
  As a special case of \cref{rem:ls-and-kappa}, we have
  \begin{align}
    \kappa_{-1, -(N - 2 + 2 \dunklindex{k})} \circ \FT{k, a}[m]
    &= -(\FT{k, -a}[m])^{-1} \circ \kappa_{-1, -(N - 2 + 2 \dunklindex{k})},
      \notag \\
    \kappa_{-1, -(N - 2 + 2 \dunklindex{k})} \circ \FT{k, a}
    &= -\FT{k, -a}^{-1} \circ \kappa_{-1, -(N - 2 + 2 \dunklindex{k})}.
      \label{eq:ft-and-kappa}
  \end{align}
\end{remark}

\begin{remark}
  De Bie--Ørsted--Somberg--Souček~\cite{DOSS2012}, following the idea of\linebreak{}
  \cite{BKO2009}, considered a ``$ (k, a) $-deformation'' $ \mscrD_{k, a} $
  of the Dirac operator (Equation~(3.2) in the paper) and found that
  $ \kappa_{-1, -(N - 2 + 2 \dunklindex{k})} $ ($ \mbfI_k $ in their notation)
  intertwines $ \mscrD_{k, 2} $ and $ \mscrD_{k, -2} $
  (Proposition~4 in the paper). They deduced from this that
  $ \kappa_{-1, -(N - 2 + 2 \dunklindex{k})} $ intertwines $ \FT{k, 2} $ and
  $ \FT{k, -2} $, which corresponds to the case $ a = 2 $ in
  Equation~\cref{eq:ft-and-kappa}.
  On the other hand, in the present paper, we proved that
  $ \kappa_{-1, -(N - 2 + 2 \dunklindex{k})} $ intertwines the
  $ \mathfrak{sl}_2 $-triples $ (\DiffH{k, a}, \DiffEp{k, a}, \DiffEm{k, a}) $
  and $ (\DiffH{k, -a}, \DiffEp{k, -a}, \DiffEm{k, -a}) $ for $ a \neq 0 $
  (\cref{thm:total-intertwining}), which was not treated in \cite{DOSS2012}.
  Our approach to obtain Equation~\cref{eq:ft-and-kappa} is to use this
  intertwining property.
\end{remark}

\section*{Acknowledgements}

The author is deeply grateful to his supervisor Professor Toshiyuki Kobayashi
for his kind and patient guidance. His many insightful suggestions served as
a guiding light for the author's research. It is an honor to dedicate
the present paper to him.
The author also thanks Kazuki Kannaka, whose comments led the author
to consider $ (k, a) $-generalized Fourier transform with negative $ a $.

The author warmly thanks Professor Ali Baklouti, his colleagues,
and Professor Hideyuki Ishi for their kindness during
``7th Tunisian-Japanese Conference: Geometric and Harmonic Analysis
on Homogeneous Spaces and Applications in Honor of Professor Toshiyuki Kobayashi''.

This work was supported by World-leading Innovative Graduate Study for Frontiers
of Mathematical Sciences and Physics (WINGS-FMSP).

\end{document}